\newcommand\xqed[1]{%
  \leavevmode\unskip\penalty9999 \hbox{}\nobreak\hfill
  \quad\hbox{#1}}
\newcommand\remarkend{\xqed{$\triangle$}}
\newcommand{\Oc}{\mathcal{O}}
\newcommand{\DO}{\Omega}   
\renewcommand{\epsilon}{\varepsilon} 
\renewcommand{\div}{\operatorname{div}\,}
\newtheorem{assumption}[lemma]{Assumption}
\begin{document}

\title{Finite Element Error Estimates on Geometrically Perturbed Domains}

\author{Piotr Minakowski \and Thomas Richter}


\institute{
    P. Minakowski \at
    Institute of Analysis and Numerics, Otto-von-Guericke
    University Magdeburg, Universit\"atsplatz 2, 39106 Magdeburg,
    Germany,\\
    \email{  piotr.minakowski@ovgu.de}
    \and
    T. Richter \at
    Institute of Analysis and Numerics, Otto-von-Guericke
    University Magdeburg, Universit\"atsplatz 2, 39106 Magdeburg,
    Germany, 
    and Interdisciplinary Center for Scientific Computing, 
    Heidelberg University, INF 205, 69120 Heidelberg, Germany, \\
    \email{  thomas.richter@ovgu.de}
}

\date{Received: date / Accepted: date}

\maketitle

\begin{abstract}    
    We develop error estimates for the finite element approximation of
elliptic partial differential equations on perturbed domains,
i.e. when the computational domain does not match the 
real geometry. The result shows that the error related to the
domain can be a dominating factor in the finite element
discretization error. The main result consists of $H^1$- and
$L^2$-error estimates for the Laplace problem. Theoretical
considerations are validated by a computational example.
\keywords{perturbed domains,  finite elements, error estimates }
     \subclass{65N30 \and 65N15 \and 35J25}
\end{abstract}

\section{Introduction}

The main aim of this work is to develop finite element (FE)
error estimates in the case when there is uncertainty with
respect to the  computational domain. 	We consider the
question of how a domain related error affects the
finite element discretization error. We use the conforming finite
element method (FEM) which is well established in the scientific
computing community and allows for a rigorous analysis of the
approximation error~\cite{ErnGuermond2004}.

Our motivation is as follows. The steps to obtain a mesh for FE
computations often come with some uncertainty, for example related to
empirical measurements or image processing techniques, e.g. medical
image segmentation \cite{Oberkampf2006,OberkampfBook}. 
Therefore, we often perform computations on a domain
which is an approximation of the real geometry, i.e., the
computational domain is close to but does not match the real
domain. In this work we do not specify the source of the error, but we
take the error into account by explicitly using the error laden
reconstructed domains.

This theoretical result is of great importance for scientific computations. 
Vast numbers of engineering branches rely on the results of computational fluid 
dynamics  
simulations, where there is often uncertainty connected to the computational 
domain. 
A prime example of this is computational based medical diagnostics, where 
shapes are
reconstructed from inverse problems, such as computer tomography. The 
assessment of error 
attributed  to the limited spatial resolution of magnetic resonance techniques 
has been 
discussed in \cite{Moore1998,Moore1999}. For a survey on computational vascular 
fluid 
dynamics, where modeling and reconstruction related issues are discussed, we 
refer to 
\cite{Quarteroni2000}. Error analysis of computational models is a key factor 
for assessing the reliability for virtual predictions.

Uncertainties in the computational domain have been studied from the
numerical perspective. Rigorous bounds for elliptic problems on random
domains have been derived, for approximate problems defined on a
sequence of domains that is supposed to converge in the set sense to a
limit domain, for both Dirichlet \cite{Babuska2003} and Neumann
\cite{Babuska2002} boundary conditions. Although our techniques are
similar, we consider a case where the geometrical error is not small,
but where it might dominate the discretization error.

When measurement data is available the accuracy of numerical predictions can be improved 
by data assimilation techniques. Applications of variational data assimilation in 
computational hemodynamics have been reviewed in \cite{DElia2012}. For recent developments 
we refer to \cite{Funke2019} and \cite{Nolte}.
On the other hand, the treatment of boundary uncertainty can be cast
into a probabilistic framework. The domain mapping method is based entirely on
stochastic mappings to transform the original deterministic/stochastic
problem in a random domain into a stochastic problem in a
deterministic domain, see
\cite{Xiu2006,Tartakovsky2006,Harbrecht2016}. The perturbation method
starts with a prescribed perturbation field at the boundary of a
reference configuration and uses a shape Taylor expansion with respect
to this perturbation field to represent the solution
\cite{Harbrecht2008}. In \cite{Allaire2015} and \cite{Dambrine2015} a similar technique was used to incorporate random perturbations of a given domain in the context of shape optimization. Moreover, the fictitious domain approach and a
polynomial chaos expansion have been applied in \cite{Canuto2007}. We note,
that the probabilistic approach is beyond the scope of this work and the
introduction of the boundary uncertainty as random variable increases
the complexity of the problem.

The above approaches incorporate additional information on the domain reconstruction, 
such as measurement data or a probabilistic distribution of the approximation error.  
In comparison to these approaches our result can be seen as the worst case scenario. 
We only require that the distance between the two domains is bounded.

  The analysis presented in this paper starts with well-known results
  regarding the finite element approximation on domains with curved
  boundaries. But in contrast to these estimates we cannot expect the
  error coming from the approximation of the geometry is small or
  even converging to zero. Instead we split the error into a
  geometric approximation error between real domain and perturbed
  domain and into an error coming from the finite  element
  discretization of the problem on the perturbed domain. A
  central step is Lemma~\ref{theorem:uandur} which estimates the geometry
  perturbation. Having in mind that this error is not small and
  cannot be reduced by means of tuning the discretization, the typical
  application case is to balance both error contributions to
  efficiently reach the barrier of the geometry error. 
  Theorem~\ref{theorem:main} gives such optimally balanced 
  estimates that include both error contributions.

This paper is organized as follows. After this introduction, in
Section \ref{sec:math} we introduce the mathematical setting and
some required auxiliary results. Section \ref{sec:disc} covers
finite element discretization and proves the main results of this work.
We illustrate our result with
 computational examples in Section \ref{sec:comp}.

\section{Mathematical setting and auxiliary result}\label{sec:math}

\subsection{Notation}

Let $\DO\subset\mathds{R}^d$  be a domain with dimension
$d\in\{2,3\}$. By $L^2(\Omega)$ we denote the Lebesgue space of square
integrable functions equipped with the norm $\|\cdot\|_\Omega$. By
$H^1(\DO)$ we denote the space of $L^2(\DO)$ functions with
first weak derivative in $L^2(\Omega)$ and by $H^m(\DO)$ for
$m\in\mathds{N}_0$ we denote the corresponding generalizations with weak
derivatives up to degree $m\in\mathds{N}_0$. 
The norms in $H^m(\DO)$
are denoted by $\|\cdot\|_{H^m(\DO)}$. For convenience we use the
notation $H^0(\DO)\coloneqq L^2(\DO)$. 
By $H^1_0(\DO)$ we denote
the space of those $H^1(\DO)$ functions that have vanishing trace
on the domain's boundary $\partial\DO$ and we use the notation
$H^1_0(\DO;\Gamma)$ if the trace only vanishes on a part of the
boundary, $\Gamma\subset\partial\DO$. Further, by
$(\cdot,\cdot)_\DO$  we denote the $L^2(\DO)$-scalar product
and $\langle\cdot,\cdot\rangle_{\Gamma}$ the $L^2$-scalar 
product on a $d-1$ dimensional manifold $\Gamma$, 
e.g. $\Gamma=\partial\DO$. Moreover, $[\partial_n\psi]$ is the jump of
the normal derivative of $\psi$, i.e. for $x\in\Gamma$ with normal
$\vec n$ (that is normal to $\Gamma$)
$
[\partial_n \psi](x)
:=\lim_{h\searrow 0}\partial_n \psi(x+h\vec n)
- \lim_{h\searrow 0}\partial_n \psi(x-h\vec n). 
$

\subsection{Laplace equation and domain perturbation}

On  $\DO \subset\mathds{R}^d$ let $f\in L^2(\DO)$ be the given right
hand side. We consider the Laplace problem with  homogeneous Dirichlet 
boundary conditions, 
\begin{equation}\label{vessel:laplace} 
-\Delta u= 	f   \text{ in }\DO,\quad u=0\text{ on }\partial\DO.
\end{equation}
The variational formulation of this problem is given by: find $u\in
H^1_0(\DO)$, such that 
\begin{equation}\label{vessel:laplaceweak} 
 (\nabla u,\nabla\phi)_\DO =
  (f,\phi)_\DO\quad
  \forall \phi\in H^1_0(\DO).
\end{equation}
The
boundary $\partial\DO$ is supposed to have a 
parametrization in $C^{m+2}$, where $m\in\mathds{N}_0$. Given the
additional regularity $f\in H^{m}(\DO)$, $H^0(\DO)\coloneqq L^2(\DO)$, 
there exists a unique solution satisfying the a-priori estimate  
\begin{equation}\label{laplace:regularity}
  \|u\|_{H^{m+2}(\DO)} \le c \|f\|_{H^m(\DO)},
\end{equation}
see e.g.~\cite{evans}. 

In the following we assume that the \emph{real domain} $\DO$ is
not exactly known but only given up to an uncertainty. We
hence define a second domain, the \emph{reconstructed domain}
$\DO_r$ with a boundary that allows for $C^{m+2}$ parametrization.
The Hausdorff distance between both domains is then denoted by
$\Upsilon\in\mathds{R}$,
\[
     \Upsilon:=\operatorname{dist}(\partial\DO,\partial\DO_r)
     :=\max\{\adjustlimits\sup_{x\in \partial\DO}\inf_{y\in \partial\DO_r}|x-y|,\adjustlimits\sup_{y\in \partial\DO_r}\inf_{x\in \partial\DO}|x-y|\}.
\]  

\begin{figure}[t]
  \begin{center}
    \includegraphics[width=0.9\textwidth]{./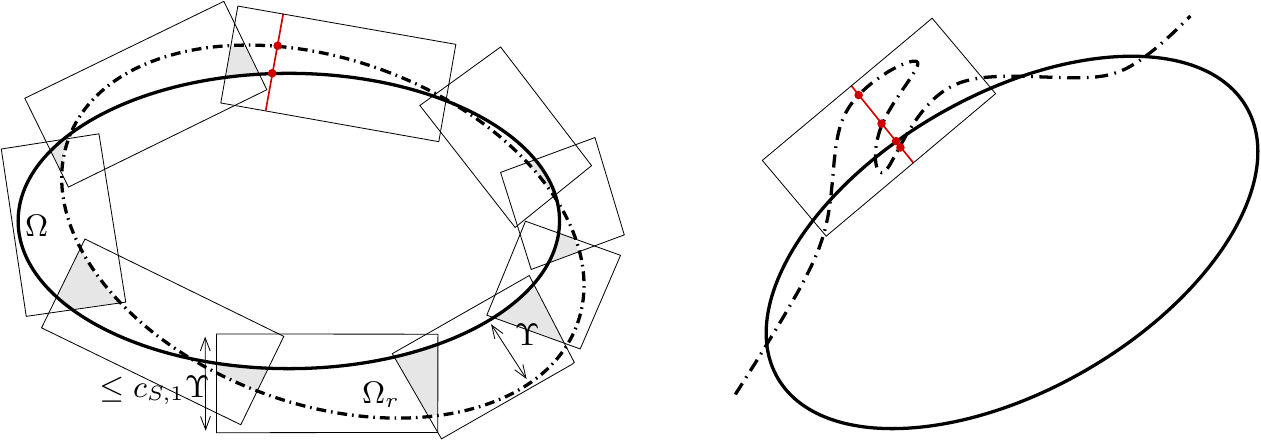}
    \caption{The domain $\DO$ (bold line) and its reconstruction
      $\DO_r$ (dashed). The cover of the domain remainders
      $S=(\Omega\setminus\Omega_r)\cup(\Omega_r\setminus \Omega)$ by a
      set of rectangles. The left configuration fulfils 
      Assumption~\ref{ass:domain}. The height $h_i$ of each rectangle $R_i$ is 
      bounded by $c_{S,1}\Upsilon$ and the intersections of each rectangle with
      the domain remainder $R_i\cap S$ do not overlap
      excessively. The shaded areas show the overlap.  The right configuration 
      is excluded by Assumption~\ref{ass:domain}.
    }
    \label{fig:cover}
  \end{center}
\end{figure}

This distance $\Upsilon$ is not necessarily small. When it comes to
spatial discretization we will be interested in both cases, $h\ll
\Upsilon$ as well as $\Upsilon\ll h$, where $h>0$ is the mesh size. 
The two domains do not match and either domain can protrude from the
other, see Figure~\ref{fig:cover}.
In order to prove our error estimates we require the following technical assumption on the
relation between the two domains $\Omega$ and $\Omega_r$. 

\begin{assumption}[Domains]\label{ass:domain}
  Let $\Omega$ and $\Omega_r$ be two domains with
  $\Omega\cap\Omega_r\neq \emptyset$ and with Hausdorff distance 
  $\Upsilon\in\mathds{R}$. Both boundaries allow for a local 
  $C^{m+2}$ parametrization, $m\in\mathds{N}_0$. Let
  \[
  S := (\Omega_r\setminus \Omega)\cup(\Omega\setminus\Omega_r).
  \]
  We assume that there exists a cover of $S$ by a finite number of
  open rectangles (or rectangular cuboids)
  $\{R_1,\dots,R_{n(S)}\}$. Each rectangle $R_i$ is given as translation
  and rotation of $(0,h_i)\times (0,t_i)$ for $d=2$, or $(0,h_i)\times
  (0,t_i^1)\times (0,t_i^2)$ for $d=3$, where the height $h_i$ is bounded by 
  $h_i\le c_{S,1}\, \Upsilon$ with a constant $c_{S,1}\ge
  0$. Following conditions hold: 
  \begin{enumerate}
  \item[\ref{ass:domain}.a)] On each rectangle $R$, the boundary lines
    $\partial\DO\cap R$ and $\partial\DO_r\cap R$ allow for unique
    parametrizations $g^R_\Omega(t)$ and $g^R_{\Omega_r}(t)$ over the
    base $t$, or $(t^1,t^2)$ for $d=3$, respectively. 
  \item[\ref{ass:domain}.b)] The area of the cover is bounded by the
    area of the remainder $S$, i.e.
    \[
    \big| \bigcup_{i=1}^{n(S)} R_i\cap S\big| \le c_{S,2} |S|,
    \]
    where $c_{S,2}>0$ is a constant.
  \end{enumerate}
  For the following we set $c_S:=\max\{c_{S,1},c_{S,2}\}$. 
\end{assumption}

Figure~\ref{fig:cover} shows such a cover for different domain
remainders. From Assumption A1 we deduce that each line through 
the height of the rectangle (marked in red in the figure) cuts each of the two
boundaries exactly one time. 
The second
assumption limits the overlap of the rectangles. These are shown as
the shaded in the left sketch in Figure~\ref{fig:cover}. 
Both assumptions on the domain 
are required for the proof of Lemma~\ref{lemma:help} that is based on Fubini's 
integral theorem. A more flexible framework that allows for a wider variety of
domains, e.g. with boundaries that feature hooks, could be based on
the construction of a map between two boundary segments on
$\partial\Omega_r$ and  $\partial\Omega$. Such approaches play an
important role in isogeometric analysis. We refer to  \cite{Xu2011}
and \cite{Xu2013} for examples on the construction of such maps. 

\medskip

To formulate the Laplace equation on the
reconstructed domain $\DO_r$ we must face the technical difficulty
that the right hand side $f\in H^m(\DO)$ is not necessarily defined
on $\DO_r$. We therefore weaken the assumptions on the
right hand side.
\begin{assumption}[Right hand side]\label{assumption:rhs}
  Let $f\in H^m_\text{loc}(\mathds{R}^d)$, i.e. $f\in H^m(G)$ for each compact
  subset $G\subset\mathds{R}^d$. 
  In addition we assume
  that the right hand side on $\Omega_r$ can be bounded by the right
  hand side on $\Omega$, i.e.
  \begin{equation}\label{bound:rhs}
    \|f\|_{H^m(\Omega_r)}\le c \|f\|_{H^m(\Omega)}. 
  \end{equation}
\end{assumption}
An alternative would be to use
  Sobolev extension theorems to extend functions $f\in H^m(\DO)$ from
  $\DO$ to $\DO_r$, see~\cite{Calderon1961}.

On $\DO_r$ we define the
solution $u_r\in H^1_0(\DO_r)$ to the perturbed Laplace problem
\begin{equation}\label{recon:laplace} 
  (\nabla u_r,\nabla\phi_r)_{\DO_r} = (f,\phi_r)_{\DO_r}
  \quad\forall \phi_r\in H^1_0(\DO_r),
\end{equation}
The unique solution to~(\ref{recon:laplace}) satisfies the bound 
\begin{equation}\label{laplace:regularity2}
  \|u_r\|_{H^{m+2}(\DO_r)} \le c \|f\|_{H^{m}(\DO_r)} \le c \|f\|_{H^{m}(\DO)}.
\end{equation}

\begin{remark}[Extension of the solutions]
  \label{remark:extendu}
  A difficulty for deriving error estimates is that $u$ is defined on $\DO$ and $u_r$ on~$\DO_r\neq \DO$. 
  Since the domains do not match, $u$ may
  not be defined on all of $\DO_r$ and vice versa. To give the
  expression $u-u_r$ a meaning on all domains we extend both solutions
  by zero outside their defining domains, i.e. $u:=0$ on
  $\DO_r\setminus \DO$  and $u_r:=0$ on $\DO\setminus\DO_r$. Globally,
  both functions still have the regularity $u,u_r\in
  H^1(\DO\cup\DO_r)$.  
  We will use the same notation for discrete functions $u_h\in V_h$ 
  defined on a mesh $\DO_h$ and extend them by zero to
  $\mathds{R}^d$.
  \remarkend
\end{remark}

The following preliminary results are necessary in the proof of the main estimates. 
They can be considered as variants of the trace inequality and of Poincar\'e's
estimate, respectively.
\begin{lemma}\label{lemma:help}
  Let $\gamma\in\mathds{R}, \gamma>0$, 
  $V\subset\mathds{R}^d$ and $W\subset\mathds{R}^d$ for $d\in
  \{2,3\}$ be two domains with boundaries $\partial V$ and $\partial
  W$  that satisfy Assumption~\ref{ass:domain} with distance
  \[
  \gamma\coloneqq \operatorname{dist}(\partial V,\partial W).
  \]
  For $\psi\in C^1(V )\cap C(\bar V)$ it holds
  \begin{equation}\label{help:1}
  \begin{split}
    &\|\psi\|_{\partial W\cap V}\le c \Big( \|\psi\|_{\partial V} + 
    \gamma^\frac{1}{2} \|\nabla \psi\|_{V\setminus W}\Big), \\
    &\|\psi\|_{V\setminus W} \le c\gamma^\frac{1}{2}
    \Big( \|\psi\|_{\partial V} + \gamma^\frac{1}{2}\|\nabla
    \psi\|_{V\setminus W}\Big),
  \end{split}
  \end{equation}
  where the constants $c>0$ depend on $c_S$ from
  Assumption~\ref{ass:domain} and the curvature of the domain
  boundaries. 
\end{lemma}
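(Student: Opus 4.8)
The plan is to reduce everything to a one-dimensional computation fibre by fibre over the rectangles $R_i$ from Assumption~\ref{ass:domain}, and then reassemble by summing. First I would fix one rectangle $R = R_i$, after a rigid motion written as $(0,h_i)\times B$ with $B=(0,t_i)$ (or $(0,t_i^1)\times(0,t_i^2)$ for $d=3$), with the first coordinate $s$ running through the height. By Assumption~\ref{ass:domain}.a), over each base point $t\in B$ the line $\{(s,t):s\in(0,h_i)\}$ meets $\partial V$ exactly once, at $s=a(t):=g^R_V(t)$, and meets $\partial W$ exactly once, at $s=b(t):=g^R_W(t)$; and since the height is bounded by $h_i\le c_{S,1}\gamma$, we have $|a(t)-b(t)|\le c_{S,1}\gamma$. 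On the sub-fibre joining these two boundary points one has the elementary one-dimensional identity
\begin{equation*}
  \psi(b(t),t) = \psi(a(t),t) + \int_{a(t)}^{b(t)} \partial_s\psi(s,t)\,\dd s ,
\end{equation*}
valid because $\psi\in C^1(V)\cap C(\bar V)$ and the open segment lies in $V$ (this is exactly the content of the no‑hook condition). Squaring, using $(x+y)^2\le 2x^2+2y^2$ and Cauchy--Schwarz on the integral, gives
\begin{equation*}
  |\psi(b(t),t)|^2 \le 2|\psi(a(t),t)|^2 + 2\,|a(t)-b(t)|\int_{I(t)}|\partial_s\psi(s,t)|^2\,\dd s
  \le 2|\psi(a(t),t)|^2 + 2 c_{S,1}\gamma\int_{I(t)}|\nabla\psi(s,t)|^2\,\dd s,
\end{equation*}
where $I(t)$ is the open sub-fibre between $a(t)$ and $b(t)$, which lies in $V\setminus W$ (this is where one uses that the sign of $a-b$ is fixed along the fibre, so $I(t)\subset V\setminus W$).

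Next I would integrate this pointwise estimate over the base $B$ with respect to $t$ and sum over $i=1,\dots,n(S)$. The left-hand side produces $\|\psi\|_{\partial W\cap V}^2$ up to the curvature-dependent Jacobian factors relating the surface measure on $\partial W$ to $\dd t$ — these are bounded because the boundary admits a $C^{m+2}$ parametrization, hence bounded first fundamental form; this is the source of the "curvature of the domain boundaries" dependence of the constant. The first term on the right integrates to (a constant times) $\|\psi\|_{\partial V}^2$ by the same Jacobian argument, now on $\partial V$. For the second (gradient) term, Fubini's theorem turns $\int_B\int_{I(t)}|\nabla\psi|^2\,\dd s\,\dd t$ into an integral of $|\nabla\psi|^2$ over $R\cap(V\setminus W)$; summing over $i$ and invoking Assumption~\ref{ass:domain}.b) — which bounds the total overlap of the $R_i\cap S$ by $c_{S,2}|S|$ — controls $\sum_i\int_{R_i\cap(V\setminus W)}|\nabla\psi|^2$ by $c_S\,\|\nabla\psi\|_{V\setminus W}^2$. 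Taking square roots and absorbing constants into a single $c$ depending on $c_S$ and the curvature yields the first inequality~(\ref{help:1}).

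For the second inequality I would argue similarly but without evaluating at the endpoint $b(t)$: for $\sigma$ in the sub-fibre $I(t)$,
\begin{equation*}
  \psi(\sigma,t) = \psi(a(t),t) + \int_{a(t)}^{\sigma}\partial_s\psi(s,t)\,\dd s,
\end{equation*}
so $|\psi(\sigma,t)|^2\le 2|\psi(a(t),t)|^2 + 2c_{S,1}\gamma\int_{I(t)}|\nabla\psi|^2\,\dd s$. Integrating now in $\sigma$ over $I(t)$ as well picks up an extra factor $|I(t)|\le c_{S,1}\gamma$ on the right, then integrating in $t$ and summing over the rectangles as before (again using Fubini and Assumption~\ref{ass:domain}.b)) gives
\begin{equation*}
  \|\psi\|_{V\setminus W}^2 \le c\,\gamma\Big(\|\psi\|_{\partial V}^2 + \gamma\,\|\nabla\psi\|_{V\setminus W}^2\Big),
\end{equation*}
which is the squared form of the claimed Poincaré-type bound. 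The main obstacle — and the reason the two parts of Assumption~\ref{ass:domain} are imposed — is precisely the bookkeeping in the reassembly step: one must be sure that each fibre segment $I(t)$ actually lies inside $V\setminus W$ (no hooks, guaranteed by \ref{ass:domain}.a) and that summing the local gradient integrals does not blow up because the rectangles overlap too much (controlled by \ref{ass:domain}.b). The one-dimensional estimates themselves are routine; the curvature-dependence of the constants enters only through the uniform equivalence of boundary surface measure and base measure, which is harmless.
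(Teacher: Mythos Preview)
Your proposal is correct and follows essentially the same approach as the paper: a fibre-by-fibre one-dimensional argument on each rectangle of the cover (fundamental theorem of calculus plus Cauchy--Schwarz to extract the $\gamma$ factor), followed by integration over the base with curvature-dependent Jacobian factors and summation over the rectangles using Assumption~\ref{ass:domain}.b) to control overlaps. The paper's proof differs only in notation---it works with points $x_{\partial V}=g^R_V(t)$ and $x_{\partial W}=g^R_W(t)$ rather than your coordinates $(a(t),b(t))$---and makes explicit the path-integral form of the boundary terms with the factors $1+|\nabla g^R|^2$, but the substance is identical.
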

\begin{proof}
  Let $R$ be one rectangle of the cover and let $x_{\partial
    W}=g^R_{W}(t)\in \partial (W\cap V)\cap R$, see
  Figure~\ref{fig:lemma}. By $x_{\partial V}=g^R_{V}(t)\in \partial
  (W\cap V)\cap R$ we denote the corresponding  
  unique point on $\partial W\cap R$. The
  connecting line segment $\overline{x_{\partial V}x_{\partial W}}$
  completely runs through $V\setminus W$, as, if the line would leave
  this remainder, it would cut each line more than once which opposes
  Assumption \ref{ass:domain}.b). Integrating the function $\psi$ along this
  line gives 
  \[
  \big|\psi(x_{\partial W})\big|^2 \le 2\big|\psi(x_{\partial V})\big|^2 + 2\big| 
  \int_{x_{\partial V}}^{x_{\partial W}}\psi'(s)\,\text{d}s\big|^2. 
  \]
  Applying H\"older's inequality to the second term on the right hand side, 
  with the length of the line   bounded by $c_S\gamma$, we obtain
  \[
  |\psi(x_{\partial W})|^2 \le 2|\psi(x_{\partial V})|^2 + 2c_S\gamma
  \int_{x_{\partial V}}^{x_{\partial W}}|\nabla
  \psi(s)|^2\,\text{d}s. 
  \]
  Using the parametrizations $x_{\partial W}=g^R_{W}(t)$ and
  $x_{\partial V}=g^R_{V}(t)$ we integrate over $t$ which gives
  \begin{equation}\label{s1}
    \int |\psi(g^R_{W}(t))|^2\,\text{d}t
    \le \int |\psi(g^R_{V}(t))|^2\,\text{d}t
    +2c_S\gamma\int
    \int_{g^R_{V}(t)}^{g^R_{W}(t)}|\nabla\psi(s)|^2\,\text{d}s\,\text{d}t. 
  \end{equation}
  The volume integral on the right hand side is exactly the integral
  over $R\cap (V\setminus W)$. The boundary integrals can be
  interpreted as path integrals and therefore be estimated by
  \begin{multline}\label{s2}
    \frac{1}{\max_t \{1+|\nabla g^R_{W}(t)|^2\}} \int_{(\partial
      W\cap V)\cap R}
    |\psi|^2\,\text{d}s\\
    \le
    \frac{1}{\min_t \{1+|\nabla g^R_{V}(t)|^2\}} \int_{R\cap \partial V}
    |\psi|^2\,\text{d}s
    +2c_S\gamma \!\!\!\! \int_{R\cap (V\setminus W)}\!\!\!\! |\nabla\psi|^2\,\text{d}x. 
  \end{multline}
  As the boundaries allow for a $C^2$ parametrization, we
  estimate
  \begin{equation}\label{s3}
    \|\psi\|^2_{(\partial
      W\cap V)\cap R}\le c(\partial V,\partial W) c_S
    \Big(\|\psi\|^2_{R\cap\partial V} + \gamma \|\nabla \psi\|^2_{R\cap
      S}\Big). 
  \end{equation}
  Summation over all rectangles and estimation of all overlaps by
  means of Assumption~\ref{ass:domain} gives
  \[
  \|\psi\|^2_{\partial W\cap V}\le c(\partial V,\partial W) c_S
  \Big(\|\psi\|^2_{\partial V} + \gamma \|\nabla \psi\|^2_{V\setminus W}\Big). 
  \]
  For $\psi\in H^1_0(V)$, the term on $\partial V$ vanishes.
  
  \begin{figure}[t]
    \floatbox[{\capbeside\thisfloatsetup{capbesideposition={left,top},capbesidewidth=0.66\textwidth}}]{figure}[\FBwidth]
             {\caption{Illustration of the proof of
                 Lemma~\ref{lemma:help}. Points in each rectangle $R$ can be
                 represented by local coordinates $(t,s)$, where $0\le s\le
                 c_S\gamma$ and the range of $t$ depends on the size of the
                 rectangle. The two boundary segments are given by the (smooth)
                 parametrizations $g^R_V(t)$ and $g^R_{W}(t)$. Each line in
                 the direction of $s$ cuts both boundaries 
                 exactly once. In the 3d setting, the base is represented by
                 two coordinates $\mathbf{t}=(t_1,t_2)$. }
               \label{fig:lemma}}
             {\includegraphics[width=0.3\textwidth]{./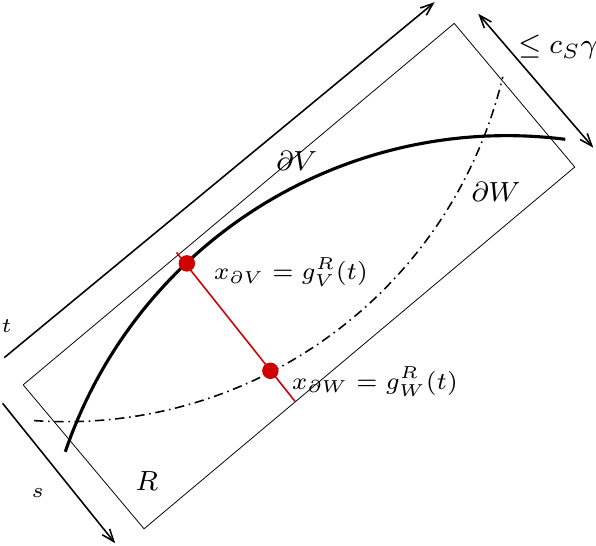}}
  \end{figure}
             
  To show the second estimate on $V\setminus W$ we again pick one
  rectangle $R$ and consider a point $x\in V\setminus W$ on the line
  connecting $x_{\partial V} = g^R_{V}(t)$ and $x_{\partial W} =
  g^R_{W}(t)$ such that we introduce the notation $x=x(t,s)$.
  By the same arguments as above it holds  
  \[
  |\psi(x(t,s))|^2 \le 2|\psi(g^R_V(t))|^2 + 2c_S \gamma 
  \int \int_{x(t)}^{g^R_V(t)}|\nabla
  \psi|^2\,\text{d}s\text{d}t.
  \]
  We integrate over $s$ and $t$ to obtain
  \[
  \|\psi(x)\|^2_{R\cap (V\setminus W)}\le
  \frac{2}{\min_t \{1+|\nabla g^R_{V}(t)|^2\}}
  \gamma \|\psi\|^2_{R\cap \partial V}
  + 2c_S \gamma^2
  \|\nabla\psi\|^2_{R\cap (V\setminus W)}. 
  \]
  Summing over all rectangles gives the desired result.
\end{proof}

The above lemma is later used in such a way that $V$ and $W$ can be 
substituted as~both $\Omega$ and $\Omega_r$, specifically to the case of use. 

\medskip

We continue by estimating the difference between the solutions of the
Laplace equations on $\DO$ and on $\DO_r$. 
\begin{lemma}\label{theorem:uandur}
  Let $\DO, \DO_r \in \mathds{R}^d$ with $\partial \DO,\partial \DO_r 
  \in C^{m+2}$  satisfying
  $\operatorname{dist}(\partial\DO,\partial\DO_r) < \Upsilon$ as~well
  as Assumption~\ref{ass:domain}. Furthermore, let $f\in
  L^2_\text{loc}(\mathds{R}^d)$ satisfy
  Assumption~\ref{assumption:rhs} and let $f_r\coloneqq f|_{\DO_r}$.
  For the solutions $u\in H^1_0(\DO)\cap H^2(\DO)$ and 
  $u_r\in H^1_0(\DO_r)\cap H^2(\DO_r)$
  to~(\ref{vessel:laplaceweak}) and~(\ref{recon:laplace}) 
  respectively, it holds that
  \[
  \|u-u_r\|_\DO + \Upsilon^\frac{1}{2}\|\nabla (u-u_r)\|_{\DO} \le c
  \Upsilon \|f\|_{\DO\cup\DO_r}.
  \]
\end{lemma}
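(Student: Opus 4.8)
The plan is to estimate $\|u-u_r\|_\DO + \Upsilon^{1/2}\|\nabla(u-u_r)\|_\DO$ by splitting the difference according to where each solution "lives" and exploiting the fact that, up to the thin remainder $S$, both functions solve the same PDE. Concretely, write $e := u - u_r$ (extended by zero as in Remark~\ref{remark:extendu}) and test the two weak formulations~(\ref{vessel:laplaceweak}) and~(\ref{recon:laplace}) against a common test function. The difficulty is that $u \in H^1_0(\DO)$ is in general not admissible in~(\ref{recon:laplace}) and conversely, because neither vanishes on the other domain's boundary after the zero extension. So the first step is to quantify the boundary defect: on $\partial\DO_r \cap \DO$ the function $u$ need not vanish, but by Lemma~\ref{lemma:help} (applied with $V = \DO$, $W = \DO_r$, $\gamma = \Upsilon$) its trace there is controlled by $\Upsilon^{1/2}\|\nabla u\|_{\DO\setminus\DO_r}$, and symmetrically for $u_r$ on $\partial\DO\cap\DO_r$.

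The second step is an energy identity. Subtract the two weak forms on the overlap $\DO\cap\DO_r$: for $\phi \in H^1_0(\DO\cap\DO_r)$ one gets $(\nabla(u-u_r),\nabla\phi)_{\DO\cap\DO_r} = -(\nabla u,\nabla\phi)_{\DO_r\setminus\DO} + (\nabla u_r,\nabla\phi)_{\DO\setminus\DO_r}$ plus right-hand-side terms supported on $S$, all of which are small because $|S| \le c\Upsilon$. To turn this into an $H^1$ bound on $e$ over all of $\DO$ I would use integration by parts on the PDEs: since $-\Delta u = f$ in $\DO$ and $-\Delta u_r = f$ in $\DO_r$, for any $\phi$ we have $(\nabla u,\nabla\phi)_{\DO} = (f,\phi)_{\DO} + \langle \partial_n u, \phi\rangle_{\partial\DO}$ and similarly for $u_r$; choosing $\phi = e$ and carefully accounting for the boundary contributions on $\partial\DO$ and $\partial\DO_r$ (where $e$ equals $-u_r$ resp.\ $u$, each small by the first step) yields
\[
\|\nabla e\|_{\DO\cup\DO_r}^2 \le c\Big( \|f\|_{S}\|e\|_{S} + \|\partial_n u\|_{\partial\DO_r\cap\DO}\|u_r\|_{\partial\DO_r\cap\DO} + \|\partial_n u_r\|_{\partial\DO\cap\DO_r}\|u\|_{\partial\DO\cap\DO_r}\Big).
\]
Here $\|\partial_n u\|_{\partial\DO_r\cap\DO}$ is bounded via the $H^2$-regularity~(\ref{laplace:regularity}) and a trace inequality, $\|u_r\|_{\partial\DO_r\cap\DO}$ and $\|u\|_{\partial\DO\cap\DO_r}$ by Lemma~\ref{lemma:help} as above, and $\|e\|_S$, $\|f\|_S$ by Lemma~\ref{lemma:help}'s second estimate together with $|S|\le c\Upsilon$. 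Collecting powers of $\Upsilon$ — each "small" factor contributes either $\Upsilon^{1/2}$ (a boundary-layer trace) or $\Upsilon^{1/2}$ (a remainder-volume term) — gives $\|\nabla e\|_{\DO}^2 \le c\Upsilon\,\|f\|_{\DO\cup\DO_r}\|\nabla e\|_{\DO} + (\text{lower order})$, hence $\|\nabla e\|_{\DO} \le c\Upsilon^{1/2}\|f\|_{\DO\cup\DO_r}$, which is exactly the gradient part of the claim after multiplying by $\Upsilon^{1/2}$.

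The third step upgrades this to the $L^2$-estimate $\|e\|_\DO \le c\Upsilon\|f\|$ by an Aubin--Nitsche duality argument. Let $z \in H^1_0(\DO\cup\DO_r)$ solve $-\Delta z = e$ on, say, $\DO\cup\DO_r$ (or one may do the duality on $\DO$ and $\DO_r$ separately and combine), so that $\|z\|_{H^2} \le c\|e\|_{\DO\cup\DO_r}$. Then $\|e\|^2 = (\nabla e,\nabla z) + (\text{boundary and remainder terms})$, and the same bookkeeping as in step two — now with one factor of $\nabla e$ replaced by $\nabla z$ and using the already-proven $\|\nabla e\| \le c\Upsilon^{1/2}\|f\|$ — produces an extra power of $\Upsilon^{1/2}$, giving $\|e\|^2 \le c\Upsilon\|f\|\cdot\Upsilon^{1/2}\cdot\|z\|_{H^2}$-type terms that close to $\|e\|_\DO \le c\Upsilon\|f\|_{\DO\cup\DO_r}$. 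The main obstacle throughout is the careful handling of the non-matching boundary terms: one must track exactly which of $u$, $u_r$, $e$ is being restricted to which piece of $\partial\DO$ or $\partial\DO_r$, ensure that every such trace is a genuine "boundary layer" quantity amenable to Lemma~\ref{lemma:help}, and verify that the curvature-dependent constants from the $C^{m+2}$ parametrizations do not secretly depend on $\Upsilon$. A secondary technical point is justifying the integration by parts and the zero-extension regularity claim of Remark~\ref{remark:extendu} rigorously, which requires the $H^2$-regularity hypotheses on $u$ and $u_r$.
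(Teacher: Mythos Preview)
Your approach is essentially the paper's: an energy identity obtained by integrating by parts, with the boundary defects on $\partial\DO_r\cap\DO$ and $\partial\DO\cap\DO_r$ controlled via Lemma~\ref{lemma:help}, followed by an Aubin--Nitsche duality for the $L^2$-estimate. Two slips in your write-up should be fixed before this becomes a proof.

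First, the displayed energy inequality has the boundary factors swapped. As written, $\|u_r\|_{\partial\DO_r\cap\DO}=0$ (since $u_r\in H^1_0(\DO_r)$) and $\|u\|_{\partial\DO\cap\DO_r}=0$ (since $u\in H^1_0(\DO)$), so both boundary products vanish and the inequality is useless. The non-trivial pairings that actually arise from the integration by parts --- and which you identified correctly in your first paragraph --- are $\|\partial_n u_r\|_{\partial\DO_r\cap\DO}\|u\|_{\partial\DO_r\cap\DO}$ and $\|\partial_n(u-u_r)\|_{\partial\DO\cap\DO_r}\|u_r\|_{\partial\DO\cap\DO_r}$. With these in place, Lemma~\ref{lemma:help} applied \emph{twice} (once to $u$ or $u_r$, once to $\nabla u$ or $\nabla u_r$) yields a full factor $\Upsilon$ rather than $\Upsilon^{1/2}$ for each trace, and the right-hand side closes directly as $c\Upsilon\|f\|^2$ --- there is no need for an implicit inequality of the form $\|\nabla e\|^2\le c\Upsilon\|f\|\,\|\nabla e\|$.

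Second, do not pose the dual problem on $\DO\cup\DO_r$. Even with $\partial\DO,\partial\DO_r\in C^{m+2}$, the union can have corners where the two boundaries cross, so $z\in H^2(\DO\cup\DO_r)$ is not guaranteed and the duality step collapses. The paper sets $-\Delta z = (u-u_r)/\|u-u_r\|_\DO$ in $H^1_0(\DO)$, where $H^2$-regularity is available by hypothesis; your parenthetical alternative (``do the duality on $\DO$ and $\DO_r$ separately'') is the right instinct, and choosing $\DO$ alone already suffices.
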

\begin{proof}
  \emph{(i)}
  We continuously extend $u$ and $u_r$ by zero to $ \mathds{R}^d$,
  c.f. Remark~\ref{remark:extendu}, such that
  $u-u_r\in H^1(\DO\cup\DO_r)$ 
  is well defined. 
  We separate the domains of integration and integrate by parts  
  \begin{equation}\label{du:0x}
  \begin{split}
  \|\nabla(u-u_r)\|_\DO^2 =& \big(\nabla(u-u_r),\nabla(u-u_r)\big)_{\DO\cap\DO_r} +  \big(\nabla(u-u_r),\nabla(u-u_r)\big)_{\DO \setminus\DO_r}\\
  =&-\big(\Delta (u-u_r),u-u_r\big)_\DO + \langle
  \partial_n(u-u_r),u-u_r\rangle_{\partial(\DO \cap\DO_r)} \\
  &\quad +\langle \partial_n(u-u_r),u-u_r\rangle_{\partial(\DO \setminus\DO_r)}.
  \end{split}
 \end{equation}
 Combining the boundary terms on the right-hand side of \eqref{du:0x},
 into an integral over $\partial \Omega$ and a jump term over
 $\partial\DO_r\cap\DO$, we obtain
  \begin{equation}\label{du:0}
  \begin{split}
  \|\nabla(u-u_r)\|_\DO^2 =& -\big(\Delta (u-u_r),u-u_r\big)_\DO + \langle
  \partial_n(u-u_r),u-u_r\rangle_{\partial\DO}  \\
  &\quad+\langle [\partial_n (u-u_r)],u-u_r\rangle_{\partial\DO_r\cap\DO}.
  \end{split}
  \end{equation}

  In $\DO\cap \DO_r$ it holds $f=f_r$ and hence (weakly) $-\Delta
  (u-u_r)=0$, such that
  \begin{equation}\label{du:2}
    -(\Delta (u-u_r),u-u_r)_\DO
    =  -(\Delta (u-u_r),u-u_r)_{\DO\cap\DO_r}
     -(\Delta u,u)_{\DO\setminus\DO_r}
    \!\!=  (f,u)_{\DO\setminus\DO_r}.
  \end{equation}
  On $\partial\DO$ it holds $u=0$ and on $\partial\DO_r\cap \DO$ 
  it holds $u_r=0$. Further, since $u\in H^2(\DO)$ it holds that
  $[\partial_n u]=0$ on $\partial\DO_r\cap \DO$. Finally, 
  $u_r=0$ on  $\DO\setminus\DO_r$, such that the boundary 
  terms reduce to
  \begin{multline}\label{du:3}
    \langle \partial_n(u-u_r),u-u_r\rangle_{\partial\DO}
    +
    \langle [\partial_n(u-u_r)],u-u_r\rangle_{\partial\DO_r\cap \DO}\\
    =
    -\langle \partial_n(u-u_r),u_r\rangle_{\partial\DO\cap\DO_r}
    -\langle \partial_n u_r,u\rangle_{\partial\DO_r\cap \DO}. 
  \end{multline}
  Combining~(\ref{du:0})-(\ref{du:3}) 
  and using the Cauchy-Schwarz inequality, we estimate
  \begin{multline}\label{du:4}
    \|\nabla (u-u_r)\|_\DO^2 \le \|f\|_{\DO\setminus\DO_r}
    \|u\|_{\DO\setminus\DO_r}\\
    +\|\partial_n(u-u_r)\|_{\partial\DO\cap\DO_r}
    \|u_r\|_{\partial\DO\cap\DO_r}
    +\|\partial_nu\|_{\partial\DO_r\cap \DO}
    \|u\|_{\partial\DO_r\cap\DO}. 
  \end{multline}
  Since $u,u_r\in H^2(\DO\cap\DO_r)$, the trace inequality 
  gives
  \begin{multline}\label{du:5}
    \|\nabla (u-u_r)\|_\DO^2 \le
    \|f\|_{\DO\setminus\DO_r}
    \|u\|_{\DO\setminus\DO_r}\\
    +
    c\big(\|u\|_{H^2(\DO)}+\|u_r\|_{H^2(\DO_r)}\big)
    \big(
    \|u_r\|_{\partial\DO\cap\DO_r} + 
    \|u\|_{\partial\DO_r\cap\DO}\big).
  \end{multline}
  Applying Lemma \ref{lemma:help} twice: to $\psi=u$ and to
  $\psi=\nabla u$ (same for $u_r)$, and extending the norms from
  $\Omega\setminus\Omega_r$ to $\Omega$ and from
  $\Omega_r\setminus\Omega$ to $\Omega_r$ give the bounds
  \begin{equation}\label{du:5.5}
    \begin{aligned}
      \|u\|_{\partial\DO_r\cap \DO}&\le c \Upsilon^\frac{1}{2} \|\nabla
      u\|_{\DO\setminus\DO_r}
      \le c\Upsilon \Big( \|\nabla u\|_{\partial\DO} + \Upsilon^\frac{1}{2}
      \|u\|_{H^2(\DO)}\Big), \\
      \|u_r\|_{\partial\DO\cap \DO_r}&\le c\Upsilon^\frac{1}{2} \|\nabla
      u_r\|_{\DO_r\setminus\DO}\le c \Upsilon
      \Big( \|\nabla u_r\|_{\partial\DO_r} + \Upsilon^\frac{1}{2}
      \|u_r\|_{H^2(\DO_r)}\Big).
    \end{aligned}
  \end{equation}
  With the trace inequality and the a priori estimates
  $\|u\|_{H^2(\DO)}\le c \|f\|_{\DO}$ and $\|u_r\|_{H^2(\DO_r)} \le c
  \|f_r\|_{\DO_r} \le c \|f\|_{\DO}$ we obtain the bounds
  \begin{equation}\label{du:5.1}
    \|u\|_{\partial\DO_r\cap \DO}\le c \Upsilon\|f\|_{\DO},\quad
    \|u_r\|_{\partial\DO\cap \DO_r}\le c \Upsilon\|f\|_{\DO}.
  \end{equation}
  Using the fact that $u=0$ on $\partial\DO$ we apply \eqref{help:1} twice 
  and use the trace inequality to get the estimate
  \begin{equation}\label{du:5.2}
    \|u\|_{\DO\setminus\DO_r}\le c \Upsilon \|\nabla u\|_{\DO\setminus
      \DO_r}
    \le c \Upsilon^\frac{3}{2}\Big(\|u\|_{H^2(\DO)} +
    \Upsilon^\frac{1}{2} \|u\|_{H^2(\DO)}\Big)\le c 
    \Upsilon^\frac{3}{2} \|f\|_{\DO}.
  \end{equation}
  We can then estimate
  $\|f\|_{\DO\setminus\DO_r}\le \|f\|_{\DO}$ by extending to the
  complete domain. Combining~(\ref{du:5}) with~(\ref{du:5.1})
  and~(\ref{du:5.2}) we obtain the estimate
  \[
  \|\nabla (u-u_r)\|^2_{\DO}\le  c\Big(\Upsilon^\frac{3}{2}+
  \Upsilon \Big) \|f\|^2_{\DO},
  \]
  which concludes the $H^1$-norm bound.

  \noindent\emph{(ii)} For the $L^2$-estimate we introduce the adjoint
  problem
  \[
  z\in H^1_0(\DO):\quad -\Delta z = \frac{u-u_r}{\|u-u_r\|_\DO}\text{
    in }\DO,
  \]
  which allows for a unique solution 
  satisfying the a-priori bound $\|z\|_{H^2(\DO)}\le c_s$ with the
  stability constant $c_s<\infty$. Testing with $u-u_r$  and 
  integrating by parts twice gives
  \begin{equation*}
\begin{aligned}
  \|u-u_r\|_\DO = &
  - (z,\Delta (u-u_r))_{\DO}
  + \langle z,\partial_n(u-u_r)\rangle_{\partial\DO} \\
  &\quad + \langle z,[\partial_n(u-u_r)]\rangle_{\partial\DO_r\cap\DO}
  -\langle \partial_n  z,u-u_r\rangle_{\partial\DO}. 
    \end{aligned}
\end{equation*}
  It holds $z=0$ and $u=0$ on $\partial\DO$, $[\partial_n u]=0$ on
  $\partial\DO_r\cap \Omega$ and $-\Delta(u-u_r)=0$ in $\Omega\cap\Omega_r$ such that we
  get 
  \begin{equation*}
    \begin{aligned}
      \|u-u_r\|_\DO &=
      (z,f)_{\DO\setminus\DO_r} 
      -\langle z,\partial_n u_r\rangle_{\partial\DO_r\cap\DO}
      +\langle \partial_n  z,u_r\rangle_{\partial\DO}\\
      &\le \|z\|_{\DO\setminus\DO_r} \|f\|_{\DO\setminus\DO_r}
      + \|z\|_{\partial\DO_r\cap\DO} \|\partial_n
      u_r\|_{\partial\DO_r\cap\DO}
      + \|\partial_n z\|_{\partial\DO} \|u_r\|_{\partial\DO}. 
    \end{aligned}
  \end{equation*}
  The boundary terms $\|z\|_{\partial\DO_r\cap\DO}$ and
  $\|u_r\|_{\partial\DO}$ are estimated with Lemma~\ref{lemma:help},
   the normal derivatives by the trace inequality and the terms on
  $\DO\setminus\DO_r$ by \eqref{help:1}
  \[
  \|u-u_r\|_\DO
  \le c \Upsilon^\frac{3}{2} \|z\|_{H^2(\DO)} \|f\|_\DO + c\Upsilon
  \|z\|_{H^2(\DO)} \|u_r\|_{H^2(\DO_r)} + 
  c\Upsilon\|z\|_{H^2(\DO)}\|u_r\|_{H^2(\DO_r)}. 
  \]
  The $L^2$-norm estimate follows by using the bounds
  $\|u\|_{H^2(\DO)}\le c\|f\|_{\DO}$, $\|u_r\|_{H^2(\DO_r)}\le c
  \|f\|_\DO$ and $\|z\|_{H^2(\DO)}\le c$. 
\end{proof}

\begin{remark}
  The estimate $\|f\|_{\DO\setminus\DO_r}\le c \|f\|_{\DO}$ is not
  optimal. Further powers of $\Upsilon$ are easily generated at the
  cost of a higher right hand side regularity. Also,
  the estimate $\|\partial_n (u-u_r)\|\le c
  (\|u\|_{H^2(\DO)}+\|u_r\|_{H^2(\DO_r)})$ by Cauchy Schwarz and the
  trace inequality could be enhanced to produce powers of
  $\Upsilon$. The limiting term in~(\ref{du:0}) however is the
  boundary integral $|\langle \partial_n
  u_r,u\rangle_{\partial\DO_r\cap \DO}|=\Oc(\Upsilon^\frac{1}{2})$
  which is optimal in the $H^1$-estimate. In~Remark~\ref{remark:opt} and
  Corollary~\ref{cor} we present an estimate that focuses on the
  intersection $\DO\cap\DO_r$ only and that  allows us to improve the order to
  $\Oc(\Upsilon)$ in the $H^1$-case by avoiding exactly this boundary
  integral.
  \remarkend
\end{remark}

\section{Discretization}\label{sec:disc}

The starting point of a finite element discretization is the
mesh of the domain $\DO$. In~our setting we do not
mesh $\DO$ directly, because the domain $\DO$ is not
exactly known. Instead, we consider a mesh of the reconstructed domain
$\DO_r$.  

We partition $\DO_r$ into a parametric triangulation $\DO_h$, 
consisting of open elements $T\subset\mathds{R}^d$. Each
element $T\in\DO_h$ stems from a unique reference element $\hat T$
which is a simple geometric structure such as a triangle, quadrilateral or
tetrahedron. The numerical examples in Section~\ref{sec:comp} are based
on quadrilateral meshes. The map $T_T:\hat T\to T$ is a polynomial of
degree 
$r\in\mathds{N}$. We will consider iso-parametric finite element
spaces, that are based on polynomials of the same degree
$r\ge 1$. In the following we assume structural and shape regularity of the
mesh such that standard interpolation estimates 
\begin{equation}\label{interpolation}
  \begin{aligned}
    \|\nabla^k (u_r-I_h u_r)\|_T&\le c
    h^{r+1-k}\|u_r\|_{H^{r+1}(T)},\quad k=0,\dots,r\le m,\\
    \|\nabla^k (u_r-I_h u_r)\|_{\partial T}&\le c
    h^{r+\frac{1}{2}-k}\|u_r\|_{H^{r+1}(T)},\quad k=0,\dots,r\le m,
  \end{aligned}
\end{equation}
will hold for all elements $T\in\DO_h$, c.f.~\cite{Bernardi1989}. 
The discretization parameter $h$  represents the size of the
largest element in the mesh. See~\cite[Section 4.2.2]{Richter2017} for a detailed description. 

On the reference element $\hat T$ let $\hat P$ be a polynomial space
of degree $r$, e.g. 
\[
\hat P \;\widehat{=}\; Q^r := \operatorname{span}\{x_1^{\alpha_1}\cdots
x_d^{\alpha_d}\;:\; 
0\le \alpha_1,\dots,\alpha_d\le r\}
\]
on quadrilateral and hexahedral meshes. Then, the finite element space
$V_h^r$ on the mesh $\DO_h$ is defined as
\[
V_h^r=\{\phi_h\in C(\bar \DO_h)\;:\; \phi_h\circ T_T\in \hat P\text{ on
  every }T\in\Omega_h\}.
\]
This parametric finite element space does not exactly match the
domain $\DO_r$. Given an iso-parametric mapping of degree $r$ it holds
$\operatorname{dist}(\partial\DO_r,\partial\DO_h) = \Oc(h^{r+1})$ and
finite element approximation error and geometry approximation error
are balanced. 
  Iso-parametric finite elements for the approximation
  on domains with curved boundaries are well
  established~\cite{ErgatoudisIronsZienkiewicz1968}, 
  optimal interpolation and finite element error estimates 
  have been presented in~\cite[Section 4.4]{Ciarlet2002}.
  The case of higher order elements with optimal order energy norm
  estimates is covered in~\cite{Lenoir1986}.
  From~\cite[Theorem 4.37]{Richter2017} we cite the following
  approximation result for the iso-parametric approximation of the
  Laplace equation that also covers the $L^2$-error and which is
  formulated in a similar notation.
\begin{theorem}\label{theorem:iso}
  Let $m\in\mathds{N}_{0}$ and let $\Omega_r$ be a domain with a boundary
  that allows for a parametrization of degree $m+2$. Let $f_r\in
  H^{m}(\Omega_r)$ and $u_h\in V_h^r\cap H^1_0(\DO_h)$
  be the iso-parametric finite  
  element discretization of degree $1\le r\le m+1$ 
  \[
  (\nabla u_h,\nabla\phi_h)_{\DO_h}= (f_r,\phi_h)_{\DO_h}\quad\forall
  \phi_h\in V_h^{r}.
  \]
  It holds
  \[
  \|u_r-u_h\|_{H^1(\DO_r)} \le ch^r \|f_r\|_{H^{r-1}(\DO_r)},\quad
  \|u_r-u_h\|_{\DO_r} \le ch^{r+1} \|f_r\|_{H^{r-1}(\DO_r)}.
  \]
\end{theorem}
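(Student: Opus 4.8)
The statement is the classical iso-parametric finite element estimate for the Poisson equation, and the plan is to recall its proof along the lines of \cite[Section~4.4]{Ciarlet2002}, \cite{Lenoir1986} and \cite[Theorem~4.37]{Richter2017}. The only structural difficulty is that the discrete domain $\DO_h$ does not coincide with $\DO_r$; since the iso-parametric map has degree $r$ one has $\operatorname{dist}(\partial\DO_r,\partial\DO_h)=\Oc(h^{r+1})$, so the geometric error is of strictly higher order than the $h^r$ approximation order and should not enter the leading term. I would remove the domain mismatch either by extending $u_r$ and $f$ to a neighbourhood of $\overline{\DO_r}$ by a Calderón extension \cite{Calderon1961} that preserves $\|u_r\|_{H^{r+1}}$ — which is legitimate since $r-1\le m$ and $\partial\DO_r\in C^{m+2}\subset C^{r+1}$ — or, more conveniently, by pulling everything back through a near-identity diffeomorphism $\Phi_h\colon\DO_h\to\DO_r$ built from the iso-parametric maps, with $\|\Phi_h-\ID\|_{W^{1,\infty}(\DO_h)}=\Oc(h^{r+1})$ and $|\det\nabla\Phi_h-1|=\Oc(h^{r+1})$. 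After this pull-back the finite element space is conforming on $\DO_h$, the bilinear form $(\nabla\cdot,\nabla\cdot)_{\DO_r}$ becomes $(\nabla\cdot,\nabla\cdot)_{\DO_h}$ up to a relative perturbation of order $\Oc(h^{r+1})$, the right-hand side is perturbed by the same amount, and the transported solution $\hat u:=u_r\circ\Phi_h$ satisfies $\|\hat u\|_{H^{r+1}(\DO_h)}\le c\,\|u_r\|_{H^{r+1}(\DO_r)}$.

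For the $H^1$ bound I would then apply a Strang-type lemma on $\DO_h$: $\|\nabla(\hat u-u_h)\|_{\DO_h}$ is controlled by the best-approximation error $\inf_{v_h\in V_h^r\cap H^1_0(\DO_h)}\|\nabla(\hat u-v_h)\|_{\DO_h}$ plus a consistency error measuring how far $\hat u$ is from solving the discrete equation. The best-approximation error is bounded by the interpolation estimate (\ref{interpolation}) with $k=1$, summed over the elements, by $c\,h^r\|\hat u\|_{H^{r+1}(\DO_h)}$. The consistency error collects exactly the $\Oc(h^{r+1})$ perturbations of the bilinear form and of the data coming from $\Phi_h\neq\ID$ — equivalently, in the extension picture, it reduces to integrals over the boundary layer $\DO_h\setminus\DO_r$ of width $\Oc(h^{r+1})$, which are handled by a trace/Poincaré estimate of the type of Lemma~\ref{lemma:help}, applied to the finite element test function that vanishes on $\partial\DO_h$ — and is therefore of order $\Oc(h^{r+1})$. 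Combining the two contributions, transporting back through $\Phi_h$ (an $H^1$-isomorphism with constants $1+\Oc(h^{r+1})$), and using the a priori estimate $\|u_r\|_{H^{r+1}(\DO_r)}\le c\,\|f_r\|_{H^{r-1}(\DO_r)}$ — which is (\ref{laplace:regularity2}) with $m$ there equal to $r-1$ — yields the first inequality.

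The $L^2$ bound follows from an Aubin--Nitsche duality argument, cleanest when carried out on $\DO_h$. Let $z\in H^1_0(\DO_r)$ solve the dual problem $-\Delta z=e/\|e\|_{\DO_r}$ with $e:=(\hat u-u_h)\circ\Phi_h^{-1}$; since $\DO_r$ is smooth this has a unique solution with $\|z\|_{H^2(\DO_r)}\le c$. Setting $\hat z:=z\circ\Phi_h$, changing variables, using the weak dual equation, inserting the interpolant $I_h\hat z$ and invoking Galerkin orthogonality up to the $\Oc(h^{r+1})$ geometric perturbation, together with Cauchy--Schwarz, I obtain
\[
\|\hat u-u_h\|_{\DO_h}\le c\,\|\nabla(\hat z-I_h\hat z)\|_{\DO_h}\,\|\nabla(\hat u-u_h)\|_{\DO_h}+\Oc\big(h^{r+1}\|f_r\|_{H^{r-1}(\DO_r)}\big).
\]
The first factor is $\Oc\big(h\,\|z\|_{H^2(\DO_r)}\big)$ by the low-regularity interpolation estimate, the second is $\Oc\big(h^r\|f_r\|_{H^{r-1}(\DO_r)}\big)$ by the $H^1$ bound; since $\|z\|_{H^2(\DO_r)}\le c$ the right-hand side is $\Oc\big(h^{r+1}\|f_r\|_{H^{r-1}(\DO_r)}\big)$. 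Transporting back through $\Phi_h$, which changes $L^2$-norms only by a factor $1+\Oc(h^{r+1})$, gives the second inequality.

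The routine part is the interpolation and duality bookkeeping. The genuine obstacle is the construction of the diffeomorphism $\Phi_h$ — or, in the extension picture, the careful organisation of the boundary-layer integrals — together with the quantitative statement that every geometry-induced perturbation (of the bilinear form, of the right-hand side, and of the norms) is of order $h^{r+1}$, strictly smaller than the $h^r$ approximation order; this is exactly what the accuracy $\operatorname{dist}(\partial\DO_r,\partial\DO_h)=\Oc(h^{r+1})$ of iso-parametric maps of degree $r$ and the $C^{m+2}$-smoothness of $\partial\DO_r$ (with $m+2\ge r+1$) provide. The self-contained execution of these estimates is precisely the content of the classical iso-parametric theory in the references cited above, from which the statement is quoted.
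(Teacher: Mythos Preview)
Your proof sketch is correct and follows the classical route (pull-back via a near-identity diffeomorphism, Strang-type lemma plus interpolation for the $H^1$ bound, Aubin--Nitsche duality for the $L^2$ bound), which is precisely the content of \cite[Theorem~4.37]{Richter2017}, \cite{Lenoir1986} and \cite[Section~4.4]{Ciarlet2002}. Note, however, that the paper does not prove this theorem at all: it is explicitly \emph{cited} from \cite[Theorem~4.37]{Richter2017} as a known result, so there is nothing to compare your argument against beyond confirming that you are reproducing the standard proof from the quoted reference.
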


We formulated the error estimate on the domain $\DO_r$ although the
finite element functions are given on $\DO_h$ only. To give
Theorem~\ref{theorem:iso} meaning, we consider all functions extended by
zero as described in Remark~\ref{remark:extendu}. Combining these
preliminary results directly yields the a priori error estimates.

\begin{theorem}\label{theorem:main}
  Let $m\in\mathds{N}_{0}$,  $\Omega$ and $\Omega_r$ be domains
  with $C^{m+2}$ boundary, distance $\Upsilon$ and that satisfy
  Assumption~\ref{ass:domain}.
  Let $\DO_h$ be the iso-parametric mesh of
  $\DO_r$ with degree $1\le r\le m+1$ and let $f\in
  H^{r-1}_{loc}(\mathds{R}^d)$ satisfy
  Assumption~\ref{assumption:rhs}.  For the finite element error 
  between the fully discrete solution $u_h\in V_h^{r}$ 
  \[
  (\nabla u_h,\nabla\phi_h)_{\DO_h}= (f,\phi_h)_{\DO_h} \quad
  \forall \phi_h\in V_h^{r}
  \]
  and the \emph{true solution} $u\in H^1_0(\DO)\cap H^{m+2}(\DO)$ it holds
  $$ \|u-u_h\|_{H^1(\Omega)} \le c \big(\Upsilon^\frac{1}{2} +
  h^r \big)\|f\|_{H^{r-1}(\Omega)}, $$
  as well as 
  $$ \|u-u_h\|_{\Omega} \le c (\Upsilon + h^{r+1}) \|f\|_{H^{r-1}(\Omega)}. $$
\end{theorem}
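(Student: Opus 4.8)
The plan is to prove Theorem~\ref{theorem:main} by a triangle-inequality split into the geometry-perturbation error and the iso-parametric discretization error, invoking the two preliminary results already established. All functions are understood to be extended by zero outside their defining domains, as in Remark~\ref{remark:extendu}, so that $u-u_h$ is well defined on $\DO\cup\DO_r\cup\DO_h$ and its norms over $\DO$ make sense.

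First I would write, for the $H^1$-estimate,
\[
\|u-u_h\|_{H^1(\DO)} \le \|u-u_r\|_{H^1(\DO)} + \|u_r-u_h\|_{H^1(\DO)},
\]
and similarly $\|u-u_h\|_{\DO}\le \|u-u_r\|_{\DO}+\|u_r-u_h\|_{\DO}$. The first term is controlled directly by Lemma~\ref{theorem:uandur}: since $\DO,\DO_r$ have $C^{m+2}$ boundary, satisfy Assumption~\ref{ass:domain}, and $f$ satisfies Assumption~\ref{assumption:rhs}, we have $\|u-u_r\|_\DO + \Upsilon^{1/2}\|\nabla(u-u_r)\|_\DO \le c\Upsilon\|f\|_{\DO\cup\DO_r}$, and by Assumption~\ref{assumption:rhs} the right-hand side is bounded by $c\Upsilon\|f\|_{H^{r-1}(\DO)}$ (here $\|f\|_{\DO\cup\DO_r}\le\|f\|_\DO+\|f\|_{\DO_r}\le c\|f\|_\DO\le c\|f\|_{H^{r-1}(\DO)}$). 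This already gives the $\Upsilon^{1/2}$ and $\Upsilon$ contributions in the two estimates, respectively — the $\Upsilon^{1/2}$ in the $H^1$-norm arising because only $\Upsilon^{1/2}\|\nabla(u-u_r)\|_\DO$ is bounded by $c\Upsilon\|f\|$.

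For the second term I would apply Theorem~\ref{theorem:iso} with $f_r := f|_{\DO_r}$, which belongs to $H^{r-1}(\DO_r)$ by Assumption~\ref{assumption:rhs} (note $r-1\le m$). This yields $\|u_r-u_h\|_{H^1(\DO_r)}\le c h^r\|f_r\|_{H^{r-1}(\DO_r)}$ and $\|u_r-u_h\|_{\DO_r}\le c h^{r+1}\|f_r\|_{H^{r-1}(\DO_r)}$, and again $\|f_r\|_{H^{r-1}(\DO_r)}\le c\|f\|_{H^{r-1}(\DO)}$. A small bookkeeping point is that Theorem~\ref{theorem:iso} measures the error over $\DO_r$ whereas I need it over $\DO$; this is handled by the zero-extension convention, since $\|u_r-u_h\|_{H^1(\DO)} = \|u_r-u_h\|_{H^1(\DO\cap\DO_r)} + \|u_r-u_h\|_{H^1(\DO\setminus\DO_r)}$ and on $\DO\cap\DO_r$ the norm is dominated by the norm over $\DO_r$, while on $\DO\setminus\DO_r$ the functions $u_r$ and $u_h$ (both extended by zero past $\DO_r$, resp.\ $\DO_h$) contribute only through the thin layer $S$, whose measure is $\Oc(\Upsilon)$ by Assumption~\ref{ass:domain}; this extra contribution is of higher or equal order in $\Upsilon$ and can be absorbed, or alternatively one simply notes $\|u_r-u_h\|_{H^1(\DO\cup\DO_r\cup\DO_h)}$ is the natural quantity and is bounded by Theorem~\ref{theorem:iso} together with $\operatorname{dist}(\partial\DO_r,\partial\DO_h)=\Oc(h^{r+1})$. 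Adding the two contributions and using $\max\{\Upsilon^{1/2},h^r\}\le \Upsilon^{1/2}+h^r$ (resp.\ the $\Upsilon+h^{r+1}$ version) gives the claimed bounds.

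The main obstacle is not any single hard estimate — both heavy lifts (Lemma~\ref{theorem:uandur} and Theorem~\ref{theorem:iso}) are already available — but rather the careful reconciliation of the three distinct domains $\DO$, $\DO_r$, $\DO_h$ and the zero-extensions, making sure that the layer terms supported in $S$ and in $\DO_r\setminus\DO_h$ are genuinely of the advertised order and do not spoil the balance. I would make this explicit by controlling $\|u_r-u_h\|_{H^1(S)}$ and similar remainder terms via $|S|\le \Oc(\Upsilon)$ together with the a priori bounds $\|u_r\|_{H^2(\DO_r)}\le c\|f\|_\DO$ and the corresponding interpolation bound for $u_h$, so that these remainders are $\Oc(\Upsilon^{1/2}\|f\|)$ in the $H^1$-norm and $\Oc(\Upsilon\|f\|)$ in $L^2$ — the same orders as the terms already present. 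With that in hand the theorem follows by assembling the pieces.
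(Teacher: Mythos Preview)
Your overall decomposition --- split via $u_r$, apply Lemma~\ref{theorem:uandur} to $u-u_r$ on $\DO$ and Theorem~\ref{theorem:iso} to $u_r-u_h$ on $\DO_r$ --- is exactly the paper's route, and your handling of the $L^2$-remainder is essentially correct. The genuine gap is in the $H^1$-remainder $\|\nabla(u_r-u_h)\|_{\DO\setminus\DO_r}=\|\nabla u_h\|_{\DO\setminus\DO_r}$, which you dismiss as controllable ``via $|S|\le\Oc(\Upsilon)$ together with the a priori bounds \dots\ and the corresponding interpolation bound for $u_h$''. That argument does not close.

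The problem is that $u_h$ is only a discrete function: the a priori bound you have is $\|\nabla u_h\|_{\DO_h}\le c\|f\|$, with no pointwise control. Knowing that the support $(\DO_h\setminus\DO_r)\cap\DO$ has small measure is not enough by itself; if you localize by the crude route (measure of the strip times $\|\nabla u_h\|_{L^\infty}^2$, then an inverse estimate $\|\nabla u_h\|_{L^\infty(T)}\le ch^{-d/2}\|\nabla u_h\|_T$) you obtain only $\|\nabla u_h\|^2_{\DO\setminus\DO_r}\le c\,\gamma_{h,\Upsilon}h^{-1}\|f\|^2\le c\,h^{r}\|f\|^2$, which is weaker than the required $h^{2r}$ for every $r\ge 1$. ``Interpolation bound for $u_h$'' has no obvious meaning here, since $u_h$ is the object you are trying to control, not a smooth function you can interpolate.

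The paper resolves this by first applying Lemma~\ref{lemma:help} on the strip of width $\gamma_{h,\Upsilon}=\Oc(\min\{h^{r+1},\Upsilon\})$ to reduce $\|\nabla u_h\|_{\DO\setminus\DO_r}^2$ to the boundary term $\gamma_{h,\Upsilon}\|\nabla u_h\|_{\partial\DO_r}^2$ plus an element-wise second-derivative term $\gamma_{h,\Upsilon}^2\|\nabla^2 u_h\|^2$. The latter is handled by an inverse estimate together with $\gamma_{h,\Upsilon}^2=\Oc(h^{2r+2})$. The boundary term is the delicate one: here the paper inserts $\pm\nabla u_r$ and $\pm\nabla I_h u_r$, so that $\|\nabla u_h\|_{\partial\DO_r}$ is split into $\|\nabla u_r\|_{\partial\DO_r}=\Oc(1)$ (trace of a smooth function), an interpolation-error boundary term, and the discrete piece $\|\nabla(u_h-I_hu_r)\|_{\partial\DO_r}$, which is converted to a volume term by a discrete trace/inverse inequality and then absorbed by Theorem~\ref{theorem:iso}. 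This is precisely the mechanism that recovers the missing half-order you lose with the naive measure argument; without it your $H^1$-estimate would stall at $\Oc(\Upsilon^{1/2}+h^{r/2})$.
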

\begin{proof}
  \emph{(i)} We start with the $H^1$ error. Inserting $\pm u_r$ and extending the finite element error $u_r-u_h$ from $\DO$  to $\Omega_r$, where a small remainder appears, we have 
  \begin{equation}\label{te:1}
    \|\nabla (u-u_h)\|_\DO^2 \le 2\Big(
    \|\nabla (u-u_r)\|_\DO^2 + \|\nabla (u_r-u_h)\|_{\DO_r}^2
    + \|\nabla (u_r-u_h)\|^2_{\DO\setminus\DO_r}\Big). 
  \end{equation}
  The first and the second term on the right hand side are estimated by 
  Lemma~\ref{theorem:uandur} and~Theorem~\ref{theorem:iso} and, since $u_r=0$ on $\DO\setminus\DO_r$, we obtain 
  \begin{equation}\label{te:1.1}
    \|\nabla (u-u_h)\|_\DO^2 \le c \Big(\Upsilon + h^{2r}\Big)
    \|f\|_{H^{r-1}(\DO)}^2 + 2 \|\nabla u_h\|^2_{\DO\setminus\DO_r}.
  \end{equation}
  We continue with the remainder $\nabla u_h$ on $\DO\setminus\DO_r$, which is non-zero on $\DO_h$ only
  \[
  \|\nabla u_h\|_{\DO\setminus\DO_r}^2 = 
  \|\nabla u_h\|^2_{(\DO_h\setminus\DO_r)\cap (\DO\setminus\DO_r)}. 
  \]
  This remaining stripe has the width
  \[
  \gamma_{h,\Upsilon}\coloneqq {\cal O}(\min\{h^{r+1},\Upsilon\}), 
  \]
  and we apply Lemma~\ref{lemma:help} to get
  \begin{equation}\label{te:2.1}
    \|\nabla u_h\|_{(\DO\setminus\DO_r)\cap (\DO_h\setminus
      \DO_r)}^2\le c \gamma_{h,\Upsilon}  \|\nabla u_h\|_{\partial \DO_r}^2
    +c \gamma_{h,\Upsilon}^2  \|\nabla^2 u_h\|_{\DO_h\setminus\DO_r}^2,
  \end{equation}
  where the second derivative  $\nabla^2 u_h$ is understood
  element wise. This term is extended to $\DO_h$ and with
  the inverse estimate and the a priori estimate for the discrete
  solution we obtain with $\gamma_{h,\Upsilon}^2={\cal O}(h^{2r+2})$ that 
  \begin{equation}\label{te:2.2}
    \gamma_{h,\Upsilon}^2  \|\nabla^2 u_h\|_{\DO_h\setminus\DO_r}^2
    \le c_{inv} \gamma_{h,\Upsilon}^2\ h^{-2}  \|\nabla u_h\|_{\DO_h}^2
    \le c_{inv} h^{2r} \|f\|_{\DO_h}^2\le c h^{2r} \|f\|^2_{\DO}. 
  \end{equation}
  To the first term on the right hand side of~(\ref{te:2.1}) we add $\pm u_r$ and $\pm I_h u_r$, the nodal interpolation of $u_r$ into the finite element space
  \begin{equation}\label{te:2.3}
    \gamma_{h,\Upsilon} \|\nabla u_h\|^2_{\partial\DO_r}\le
    c\gamma_{h,\Upsilon}\big(
    \|\nabla u_r\|^2_{\partial\DO_r}
    +\|\nabla (u_r-I_h u_r)\|^2_{\partial\DO_r}
    +\|\nabla (u_h-I_h u_r)\|^2_{\partial\DO_r}\big).
  \end{equation}
  Here, the first and last terms are estimated with the trace inequalities and, 
  in the case of the discrete term with the inverse inequality\footnote{We 
  refer to \cite[Chapter 1.4.3]{DiPietro2012} or 
  \cite{Barrenechea2017,Cangiani2018} for recent developments on the local 
  trace inequality and the inverse estimate on meshes with curved boundaries.},
  followed by adding $\pm u_r$ we get
  \begin{multline}\label{te:2.31}
    \gamma_{h,\Upsilon} \|\nabla u_h\|^2_{\partial\DO_r}\le
    c\Upsilon \|f\|_{L^2(\DO)}^2 
    + ch^{r+1} \|\nabla (u_r-I_h u_r)\|^2_{\partial\DO_r}\\
    + 
    ch^r \|\nabla (u_r-I_h u_r)\|^2_{\DO_r}
    + h^r \|\nabla (u_h-u_r)\|^2_{\DO_r}.
  \end{multline}
  We used both $\gamma_{h,\Upsilon}={\cal O}(h^{r+1})$ and $\gamma_{h,\Upsilon}={\cal O}(\Upsilon)$. 
  Then, collecting all terms in~(\ref{te:1.1})-(\ref{te:2.31})
  and using the interpolation estimates as well as Theorem~\ref{theorem:iso} we finally get 
  \begin{equation}
    \|\nabla (u-u_h)\|^2_\DO \le 
    c \big(\Upsilon + h^{2r}\big) \|f\|_{H^{r-1}(\DO)}^2
    +ch^{3r-1}\|f\|_{H^{r-1}(\DO)}^2,
  \end{equation}
  which shows the a priori estimate since $3r-1\le 2r$ for all $r\ge 1$.

%
%
%
  
  \medskip\noindent\emph{(ii)} For the $L^2$-error we proceed in the
  same way, but the remainder appearing in~(\ref{te:1}) does not carry
  any derivative, such that, instead of~(\ref{te:2.1}) the optimal order
  variant of Lemma~\ref{lemma:help} with integration to the boundary
  $\partial\DO_h$, where $u_h=0$, can be applied, i.e.
  \[
  \|u-u_h\|^2_\DO \le c \Big( \|u-u_r\|^2_\DO + \|u_r-u_h\|_{\DO_r}^2
  +\Upsilon^2 \|\nabla u_h\|^2_{\DO\setminus\DO_r}
  \Big). 
  \]
  The $L^2$-estimate directly follows with Lemma~\ref{theorem:uandur},
  Theorem~\ref{theorem:iso} and by the a priori estimate $\|\nabla
  u_h\|^2_{\DO\setminus\DO_r}\le c \|\nabla u_h\|^2_{\DO_h}\le
  c\|f\|^2_{\DO_r}$.   

\end{proof}

\begin{figure}[t]
  \begin{center}
    \begin{minipage}{0.35\textwidth}
      \includegraphics[width=\textwidth]{./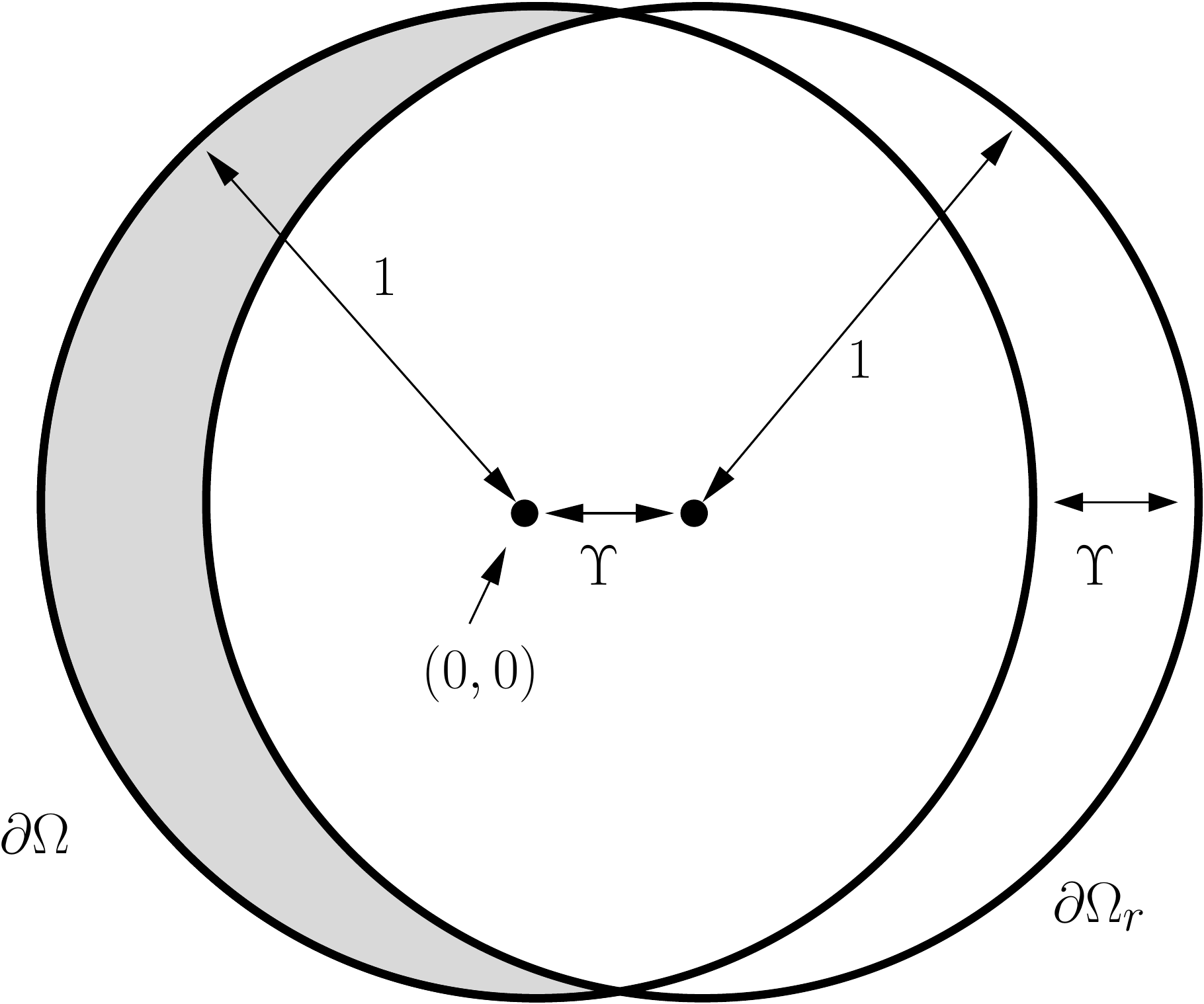}
    \end{minipage}\hspace{0.05\textwidth}
    \begin{minipage}{0.57\textwidth}\small
      On $\DO=B_1(0)$ and $\DO_r=B_1(\Upsilon)$ consider $-\Delta u=4$
      and $-\Delta u_r=4$, respectively with homogeneous Dirichlet
      conditions and the solutions
      \[
      u(x,y) = 1-x^2-y^2,\;
      u_r(x,y) = 1-(x-\Upsilon)^2-y^2
      \]
      and the errors
      \[
      \|\nabla (u-u_r)\|_{\DO} = \sqrt{8} \Upsilon^\frac{1}{2} +
      \Oc(\Upsilon),\;
      \|u-u_r\|_{\DO} = \sqrt{\pi} \Upsilon+
      \Oc(\Upsilon^3). 
      \]
    \end{minipage}
  \end{center}
  \caption{Illustration concerning Remark~\ref{remark:opt}. The error
    estimates for $u-u_h$ are optimal, if the error is evaluated on
    $\Omega$. The lowest order terms $\Oc(\Upsilon^\frac{1}{2})$
    appear in the shaded area $\DO\setminus\DO_r$ where $u_r$ and (most
    of) $u_h$ are zero. } 
  \label{fig:opt}
\end{figure}

  \begin{remark}[Polygonal domains]
    In two dimensions, the extension of the error estimates to the case of 
    convex polygonal domains, where  $u\in H^2(\DO)$ and $u_r\in H^2(\DO_r)$,
    is relatively straightforward.
    In this case, $\DO_h$ fits $\DO_r$ such that the finite element
    error $u_r-u_h$ can be estimated with the standard a priori result
    $\|u_r-u_h\|+h\|\nabla (u_r-u_h)\| \le c \|f\|$.
    The extension of Lemma~\ref{lemma:help}, which locally requires
    smoothness of the parametrizations $g^R_W(\cdot)$ and
    $g^R_V(\cdot)$, see steps~(\ref{s1})-(\ref{s3}), can be
    accomplished by refining the cover of the domain which is
    described in Assumption~\ref{ass:domain}, see also
    Figure~\ref{fig:cover}: 
    All rectangles are split in such a
    way that the corners of $\partial\DO$ and $\DO_r$ are cut by the
    edges of rectangles. This allows to derive the optimal error
    estimates $\|u-u_h\|_{H^1(\DO)}= {\cal O}(\Upsilon^\frac{1}{2}+h)$
    and $\|u-u_h\|_{H^1(\DO)}= {\cal O}(\Upsilon+h^2)$. In three
    dimensions, such a simple refinement of the cover is not possible and the 
    extension to polygonal domains is more involved.
    \remarkend
\end{remark}

\begin{remark}[Optimality of the estimates]\label{remark:opt}
  Two ingredients govern the error estimates:
  \begin{enumerate}
  \item
    A geometrical error of order $\Oc(\Upsilon^\frac{1}{2})$
    and $\Oc(\Upsilon)$, that describes the discrepancy
    between $\DO$ and $\DO_r$, in the $H^1$ and $L^2$ norms respectively.
    This term is optimal which is easily 
    understood by considering a simple example illustrated in
    Figure~\ref{fig:opt}, namely
    $-\Delta u=4$ on the unit disc $\DO=B_1(0)$ and $-\Delta u_r=4$
    on the shifted domain $\DO_r = B_1(\Upsilon)$. 
    The errors in $H^1$ norm and
    $L^2$ norms expressed on the complete domain $\DO$ are estimated
    by 
    \[
    \|u-u_r\|_\DO=\sqrt{\pi}\Upsilon + \Oc(\Upsilon^3),\quad
    \|\nabla(u-u_r)\|_\DO=\sqrt{8\Upsilon} + \Oc(\Upsilon).
    \]
    A closer analysis shows that the main error -- in the $H^1$-case
    -- occurs on the small shaded stripe $\DO\setminus\DO_r$ such that
    \[
    \|\nabla (u-u_r)\|_{\DO\setminus\DO_r} =
    \Oc(\Upsilon^\frac{1}{2}),\quad
    \|\nabla (u-u_r)\|_{\DO\cap\DO_r} = \Oc(\Upsilon),
    \]
    while the $L^2$-error in $\DO\cap\DO_r$ is optimal
    \[
    \|u-u_r\|_{\DO\setminus\DO_r} =
    \Oc(\Upsilon^\frac{3}{2}),\quad
    \|u-u_r\|_{\DO\cap\DO_r} = \Oc(\Upsilon).
    \]
  \item The usual Galerkin error $\|u_r-u_h\|_{\DO_r} +
    h\|\nabla (u-u_r)\|_{\DO_r} = \Oc(h^{r+1})$ of iso-parametric
    finite element approximations contributes to the overall
    error. For $\DO=\DO_r$, i.e. $\Upsilon=0$, this would be the
    complete error. This estimate is optimal, as it shows the same
    order as usual finite element bounds on meshes that resolve the
    geometry. 
  \end{enumerate}
  \remarkend
\end{remark}

In Section~\ref{sec:comp} we discuss the difficulty of measuring
errors on an unknown domain $\DO$. The optimality of the error
estimates is difficult to verify which is mainly due to the technical
problems in evaluating norms on the domain remainders
$\DO\setminus\DO_r$, where no finite element mesh is given. These
remainders contribute the lowest order parts $\Upsilon^\frac{1}{2}$ 
in the overall error. The following corollary is closer to the setting
of the numerical examples and it yields the approximation of order
$\Upsilon$ in the $H^1$-norm error. In addition to the previous
setting we require a regular map $T_r:\DO\to \DO_r$ between the two
domains. By pulling back $\DO_r$ to $\DO$ via this map a Jacobian
arises that controls the geometrical error and that hence has to be
controllable by $\Upsilon$. 

\begin{corollary}\label{cor}
  In addition to the assumptions of Theorem~\ref{theorem:main} let
  there be a $C^1$-diffeomorphism
  \[
  T_r:\DO\to\DO_r
  \]
  satisfying
  \begin{equation}\label{diff}
    \|I-\operatorname{det}(\nabla T_r) \nabla T_r^{-1}\nabla
    T_r^{-T}\|_{L^\infty(\DO)} =\Oc(\Upsilon).
  \end{equation}
  Further, let the following regularity of problem data hold in
  addition to Assumption~\ref{assumption:rhs}
  \begin{equation}\label{cor:regularitydata}
    f\in W^{1,\infty}_{loc}(\mathds{R}^d)\cap H^{r-1}_{loc}(\mathds{R}^d)
  \end{equation}
  and let the solution satisfy
  \begin{equation}\label{cor:regularity}
    \|u\|_{W^{2,\infty}(\DO)} +
    \|u_r\|_{W^{2,\infty}(\DO_r)} \le c. 
  \end{equation}
  Then, it holds 
  \[
  \|\nabla (u-u_h)\|_{\DO\cap\DO_r\cap\DO_h}\le c \big( \Upsilon + h^r \big).
  \]
\end{corollary}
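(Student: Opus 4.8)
Here is the plan I would follow.

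The plan is to decompose the error on the overlap as $u-u_h=(u-u_r)+(u_r-u_h)$ and to treat the two parts separately. The Galerkin part needs nothing new: arguing as in the proof of Theorem~\ref{theorem:main} (note that $f$ agrees with $f_r=f|_{\DO_r}$ on $\DO_r$), Theorem~\ref{theorem:iso} gives $\|\nabla(u_r-u_h)\|_{\DO_r}\le c\,h^r\|f\|_{H^{r-1}(\DO)}$, and since $\DO\cap\DO_r\cap\DO_h\subset\DO_r$ this already contributes the $\Oc(h^r)$ term. Everything therefore reduces to the sharpened geometric bound
\[
\|\nabla(u-u_r)\|_{\DO\cap\DO_r}\le c\,\Upsilon .
\]
Compared with Lemma~\ref{theorem:uandur}, evaluating the norm only on the intersection is exactly what discards the boundary term $\langle\partial_n u_r,u\rangle_{\partial\DO_r\cap\DO}$ that was responsible for the $\Oc(\Upsilon^{\frac12})$ order there; what remains is to show that the interior contribution is genuinely of order $\Upsilon$.

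To exploit the diffeomorphism I would pull the problem~(\ref{recon:laplace}) on $\DO_r$ back to $\DO$. Setting $\tilde u_r:=u_r\circ T_r\in H^1_0(\DO)$, the change of variables $y=T_r(x)$ turns~(\ref{recon:laplace}) into the perturbed elliptic problem
\[
\int_\DO A\,\nabla\tilde u_r\cdot\nabla\phi\,\dd x=\int_\DO \tilde f\,\phi\,\dd x\qquad\forall\phi\in H^1_0(\DO),
\]
with $A:=\operatorname{det}(\nabla T_r)\,\nabla T_r^{-1}\nabla T_r^{-T}$ and $\tilde f:=\operatorname{det}(\nabla T_r)\,(f\circ T_r)$. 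Subtracting this from~(\ref{vessel:laplaceweak}), moving the coefficient perturbation to the right hand side and testing with $\phi=u-\tilde u_r\in H^1_0(\DO)$, Poincar\'e's inequality yields
\[
\|\nabla(u-\tilde u_r)\|_\DO\le c\,\|f-\tilde f\|_\DO+\|I-A\|_{L^\infty(\DO)}\,\|\nabla\tilde u_r\|_\DO .
\]
Here $\|I-A\|_{L^\infty(\DO)}=\Oc(\Upsilon)$ is precisely assumption~(\ref{diff}); $\|\nabla\tilde u_r\|_\DO\le c$ follows from the a priori bound~(\ref{laplace:regularity2}) transported by the $C^1$-diffeomorphism, or from~(\ref{cor:regularity}); and $\|f-\tilde f\|_\DO=\Oc(\Upsilon)$ follows from the extra regularity~(\ref{cor:regularitydata}) of $f$, since the geometric map $T_r$ realising the perturbation can (and should) be taken with $\|T_r-\operatorname{id}\|_{W^{1,\infty}(\DO)}=\Oc(\Upsilon)$, which in particular forces $\operatorname{det}\nabla T_r=1+\Oc(\Upsilon)$. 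Hence $\|\nabla(u-\tilde u_r)\|_\DO=\Oc(\Upsilon)$.

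It then remains to transfer this back from $\tilde u_r$ to $u_r$ on the overlap. Writing $\nabla(\tilde u_r-u_r)=\nabla T_r^{T}\big((\nabla u_r)\circ T_r\big)-\nabla u_r$, inserting $\pm(\nabla u_r)\circ T_r$, and bounding pointwise by $\|\nabla T_r\|_{L^\infty}\|D^2u_r\|_{L^\infty}\|T_r-\operatorname{id}\|_{L^\infty}$ and $\|\nabla T_r-I\|_{L^\infty}\|\nabla u_r\|_{L^\infty}$ — both $\Oc(\Upsilon)$ by~(\ref{cor:regularity}) and the $C^1$-closeness of $T_r$ to the identity — one gets $\|\nabla(\tilde u_r-u_r)\|_{\DO\cap\DO_r}=\Oc(\Upsilon)$, because $|\DO\cap\DO_r|$ is bounded. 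Collecting the three estimates,
\begin{align*}
\|\nabla(u-u_h)\|_{\DO\cap\DO_r\cap\DO_h}
&\le\|\nabla(u-\tilde u_r)\|_\DO+\|\nabla(\tilde u_r-u_r)\|_{\DO\cap\DO_r}+\|\nabla(u_r-u_h)\|_{\DO_r}\\
&\le c\,(\Upsilon+h^r).
\end{align*}
The change of variables and the Cauchy--Schwarz/Poincar\'e bookkeeping are routine. The real point of the argument — and the reason for the additional hypotheses~(\ref{diff})--(\ref{cor:regularity}) — is to make sure that \emph{every} perturbation term carries a full power of $\Upsilon$ rather than $\Upsilon^{\frac12}$ or merely $\Oc(1)$: the coefficient perturbation through~(\ref{diff}), the data perturbation through~(\ref{cor:regularitydata}), and both the $L^\infty$-control of the solutions and the transfer $\tilde u_r\mapsto u_r$ through~(\ref{cor:regularity}). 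I expect this last transfer, together with securing a map $T_r$ that is $\Oc(\Upsilon)$-close to the identity in $C^1$, to be the only genuinely delicate step.
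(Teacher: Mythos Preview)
Your proposal is correct and follows essentially the same route as the paper: split into $(u-u_r)+(u_r-u_h)$, bound the Galerkin part by Theorem~\ref{theorem:iso}, pull $u_r$ back to $\hat u_r=u_r\circ T_r\in H^1_0(\DO)$, compare the two weak formulations on $\DO$ to get $\|\nabla(u-\hat u_r)\|_\DO=\Oc(\Upsilon)$ via~(\ref{diff}) and the $W^{1,\infty}$ regularity of $f$, and finally transfer $\hat u_r\mapsto u_r$ on the overlap using~(\ref{cor:regularity}). You also correctly flag that the argument tacitly needs $\|T_r-\operatorname{id}\|_{W^{1,\infty}(\DO)}=\Oc(\Upsilon)$, which the paper likewise uses implicitly in its Taylor step and only makes explicit in the discussion following the proof.
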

\begin{proof}
  We start by splitting the error into
  domain approximation and finite element approximation errors
  \begin{equation}\label{cor:1}
    \|\nabla (u-u_h)\|_{\DO\cap\DO_r\cap\DO_h}\le 
    \|\nabla (u-u_r)\|_{\DO\cap\DO_r}+
    \|\nabla (u_r-u_h)\|_{\DO_r\cap\DO_h}.
  \end{equation}
  An optimal order estimate of the finite element error
  \begin{equation}\label{cor:1.5}
    \|\nabla  (u_r-u_h)\|_{\DO_r\cap\DO_h}\le 
    \|\nabla  (u_r-u_h)\|_{\DO_r}=\Oc(h^r)
  \end{equation}
  is given in Theorem~\ref{theorem:iso}.
  To estimate the first term of the right hand side of~(\ref{cor:1}) we~introduce the function
  \[
  \hat u_r(x):=u_r(T_r(x)),
  \]
  which satisfies $\hat u_r\in H^1_0(\DO)$ and solves the problem
  \[
  (J_r F_r^{-1}F_r^{-T}\nabla \hat u_r,\nabla\hat\phi_r)_{\DO} =
  (\hat f_r,\hat\phi_r)\quad\forall \hat\phi_r\in H^1_0(\DO),
  \]
  where $\hat f_r(x):=f(T_r(x))$ and where $F_r:=\nabla T_r$ and
  $J_r:=\operatorname{det}(F_r)$. See~\cite[Section
    2.1.2]{Richter2017}  for details of this transformation of the variational
  formulation. To estimate the domain approximation error
  in~(\ref{cor:1}) we introduce $\pm \hat u_r$ to obtain
  \begin{equation}\label{cor:2}
    \|\nabla (u-u_r)\|_{\DO\cap\DO_r}\le
    \|\nabla (u-\hat u_r)\|_{\DO\cap\DO_r}+
    \|\nabla (\hat u_r-u_r)\|_{\DO\cap\DO_r}.
  \end{equation}
  We introduce the notation $e_r:=u-\hat u_r$, extend the first term
  from ${\DO\cap\DO_r}$ to $\DO$  and insert 
  $\pm J_rF_r^{-1}F_r^{-T}\nabla\hat u_r$ which gives
  \begin{multline}\label{cor:3}
    \|\nabla (u-\hat u_r)\|_{\DO\cap\DO_r}^2\le 
    \|\nabla (u-\hat u_r)\|_{\DO}^2
    \\=
    (\nabla u,\nabla e_r)_{\DO} -
    (J_rF_r^{-1}F_r^{-T}\nabla \hat u_r,\nabla  e_r)_{\DO}
    +
    (J_rF_r^{-1}F_r^{-T}\nabla \hat u_r,\nabla  e_r)_{\DO}
    -(\nabla \hat u_r,\nabla e_r)_{\DO}\\
    =(f-\hat f_r,e_r)_{\DO} + ( [J_rF_r^{-1}F_r^{-T}-I]\nabla\hat
    u_r,\nabla e_r)_{\DO}\\
    \le \|f-\hat f_r\|_{\DO} c \|\nabla e_r\|_\DO
    + \|[J_rF_r^{-1}F_r^{-T}-I]\|_{L^\infty(\DO)} \|\nabla e_r\|_\DO,
  \end{multline}
  where we also used Poincar\'e's estimate. For bounding 
  $f-\hat f_r$ we consider a point $x\in \DO\cap \DO_r$, use the
  higher regularity of the right hand side~(\ref{cor:regularitydata})
  to estimate by a Taylor expansion 
  \begin{equation}\label{cor:33}
    |f(x)-\hat f_r(x)| = |f(x)-f(T_r(x))|=
    |\nabla f(\xi)\cdot(T_r(x)-x)|
    \le \Upsilon |\nabla f(\xi)|,
  \end{equation}
  where $\xi\in \DO$ is some point on the line from $x$ to
  $T_r(x)$. 
  We take the square and integrate over $\DO$ to get the
  estimate 
  \begin{equation}\label{cor:4}
    \|f-\hat f_r\|_\DO \le c \Upsilon
    \|f\|_{W^{1,\infty}(\Omega_\Upsilon)}, 
  \end{equation}
  where $\Omega_\Upsilon$ is a enlargement of $\DO$ by at most ${\cal
    O}(\Upsilon)$, since intermediate values $\xi$ used
  in~(\ref{cor:33}) are not necessarily part of $\DO\cup\DO_r$. 
  This argument is also applicable to the second term on the right hand side of~(\ref{cor:2})
  such that it holds 
  \[
  \|\nabla (\hat u_r-u_r)\|_M \le c\Upsilon
  \|u_r\|_{W^{2,\infty}(\DO\cap\DO_r)} \le c \Upsilon. 
  \]
  Combining this with~(\ref{cor:1}),~(\ref{cor:1.5}),~(\ref{cor:2})
  and (\ref{cor:3}) 
  finishes the proof. 
\end{proof}

Unfortunately this corollary can not be applied universally  
as the existence of a suitable map
$T_r:\DO\to\DO_r$ depends on the given application.
Here a construction, corresponding  to the ALE map, can be realised by
means of a \emph{domain deformation} $\hat d:\DO\to\mathds{R}^2$
\[
T_r(x) = x+\hat d(x),\quad F_r(x) = I + \nabla\hat d(x).
\]
Such a construction is common in fluid-structure interactions,
see~\cite[Section 2.5.2]{Richter2017}.
Given that $|\hat d|,|\nabla \hat d| = \Oc(\Upsilon)$ it holds
\[
\|J_r\|_{L^\infty(\DO)} = 1+\Oc(\Upsilon),\quad
\|I-J_rF_r^{-1}F_r^{-T}\|_{L^\infty(\DO)} = \Oc(\Upsilon). 
\]
While the assumption $|\hat d|=\Oc(\Upsilon)$ is easy to satisfy since
$\operatorname{dist}(\partial\DO,\partial\DO_r)\le \Upsilon$, the
condition 
$|\nabla \hat d|=\Oc(\Upsilon)$ will strongly depend on the shape and
regularity of the boundary.

\begin{figure}[t]
  \begin{center}
    \includegraphics[width=0.8\textwidth]{./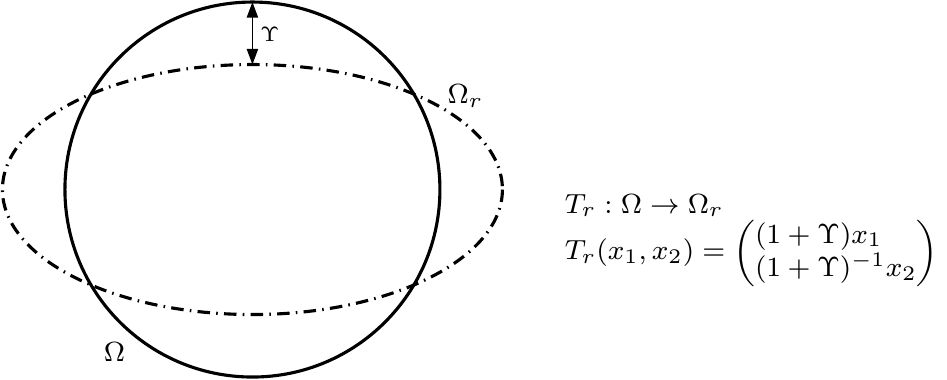}
  \end{center}
  \caption{Illustration of an example for the application of
    Corollary~\ref{cor}.} 
  \label{fig:cor}
\end{figure}

We conclude by discussing a simple application of this corollary.
Figure~\ref{fig:cor}  illustrates the setting. 
Let
$\Omega$ be the unit sphere, $\Omega_r$ be an ellipse
\[
\DO = \{x\in \mathds{R}^2\,:\, x_1^2+x_2^2<1\},\quad
\DO_r = \{x\in \mathds{R}^2\,:\,
(1+\Upsilon)^2x_1^2+(1+\Upsilon)^{-2}x_2^2<1\}. 
\]
It holds $\operatorname{dist}(\partial\DO,\partial\DO_r)\le
\Upsilon$ and we define the map $T_r:\DO\to\DO_r$ by 
\[
T_r(x) = \begin{pmatrix}
  (1+\Upsilon)^{-1} x_1 \\ (1+\Upsilon) x_2,
\end{pmatrix},\quad
F_r=\nabla T_r = \begin{pmatrix}
  (1+\Upsilon)^{-1}&0\\0&(1+\Upsilon)
\end{pmatrix},\quad J_r=1. 
\]
This map satisfies the assumptions of the corollary
\[
I-J_r F_r^{-1}F_r^{-T} =\Upsilon(\Upsilon+2)
\begin{pmatrix}
  -1&0\\0& (1+\Upsilon)^{-2}
\end{pmatrix},\quad
\|I-J_r F_r^{-1}F_r^{-T}\|_{\infty}=2\Upsilon+\Upsilon^2. 
\]

\section{Numerical illustration}\label{sec:comp}

\begin{figure}[t]
  {\includegraphics[width=.44\textwidth]{./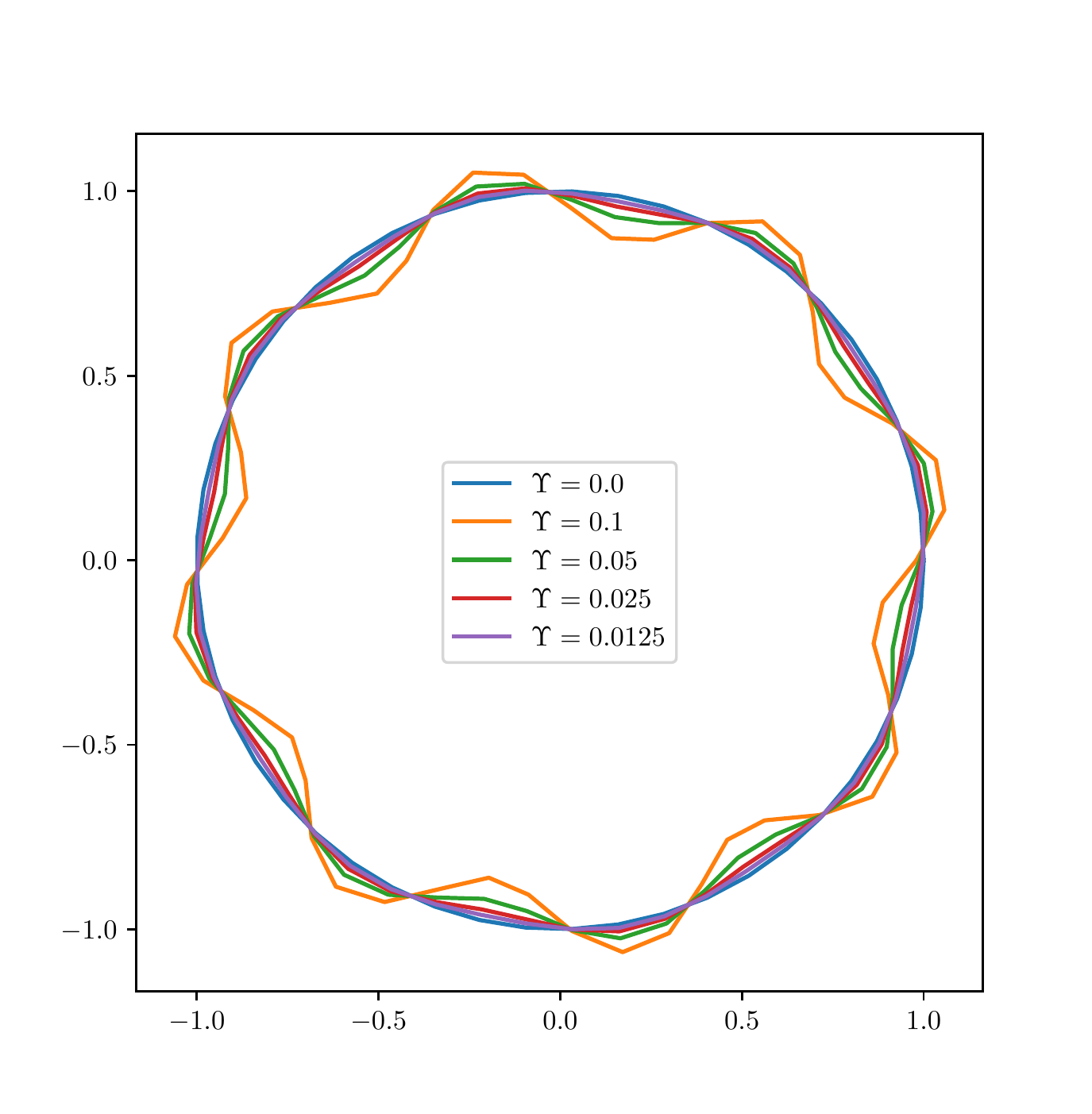}}
  {\includegraphics[width=.55\textwidth]{./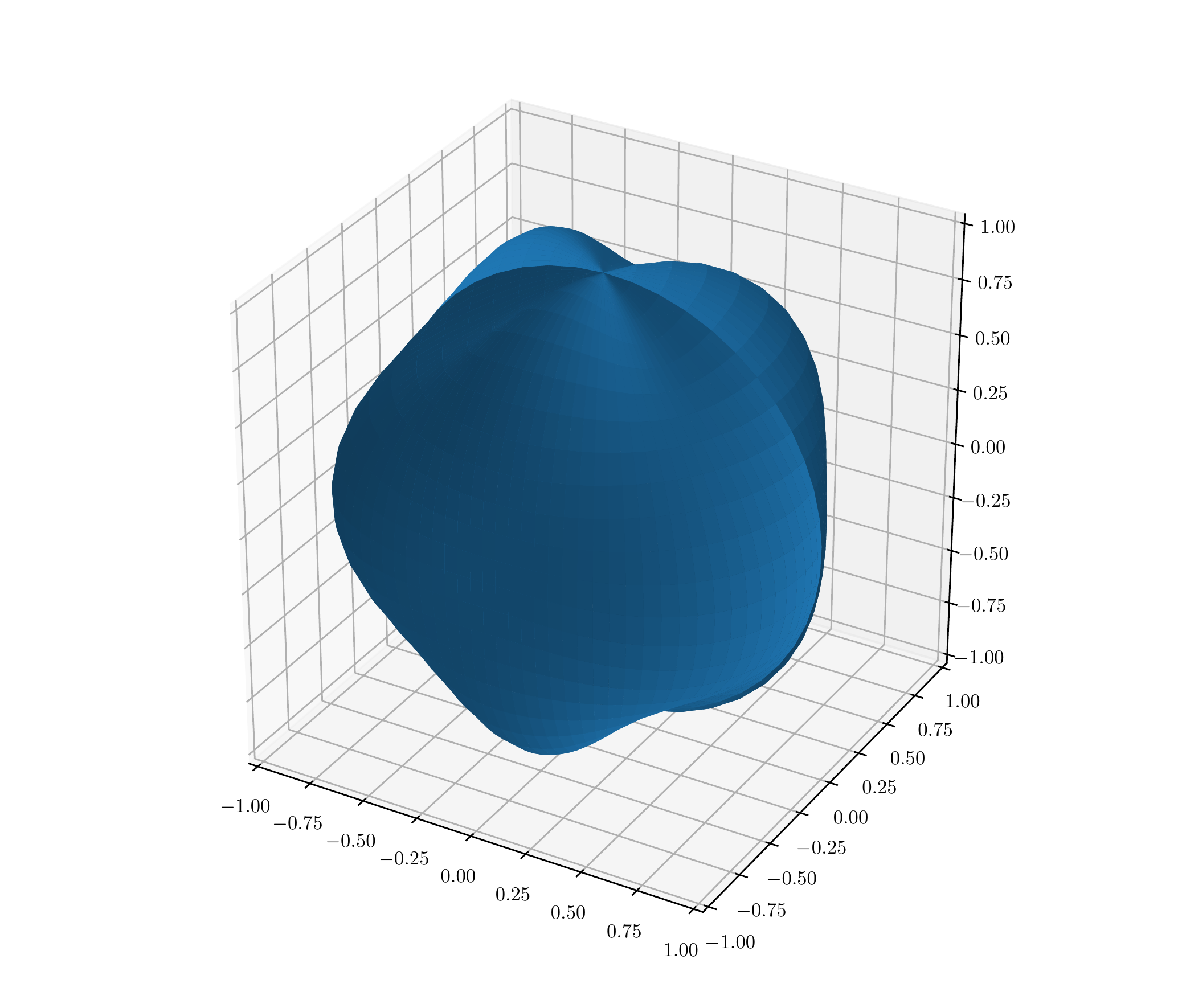}}
  \caption{Sketch of the computational domains w.r.t. the parameter $\Upsilon$ 
  in two dimensions (left) and for $\Upsilon=0.1$ in three dimensions (right).}
  \label{fig:domain}
\end{figure}

In this section we illustrate the theoretical results from the previous section. We compute the Laplace problem on a family of domains representing different values of $\Upsilon$. Moreover, we numerically extend the analytical predictions and show that a similar behavior holds for the Stokes system.

We consider $\DO$ to be a unit ball in two and three dimensions and
define a family of perturbed domains $\DO_\Upsilon$, with the
amplitude of the perturbation being dependent on the coefficient
$\Upsilon$, cf.~Figure \ref{fig:domain}.

In two dimensions, the boundary of the domain $\DO_\Upsilon$ 
is given in polar coordinates~$(\rho,\varphi)$ by
\[
\partial\DO_\Upsilon = \{(1 -
\Upsilon/5+\Upsilon\sin(8\varphi),\varphi) \text{  for  } \varphi \in
[0,2\pi)\}, 
\]
and in three dimensions in spherical coordinates $(\rho,\theta,\varphi)$ by
\[
\partial\DO_\Upsilon = \{(1 - \Upsilon/5+\Upsilon \sin(3\varphi)\sin(3\theta),\theta,\varphi) \text{  for  } \theta \in [0,\pi), \varphi \in [0,2\pi)\}.
\]
For computations we take
\[
\Upsilon \in \{0,0.0125,0.025,0.05,0.1 \}.
\]

In order to illustrate the convergence result from Theorem~\ref{theorem:main}, 
we compute the model problem on a series of uniformly refined meshes.
The dependence between the mesh size $h$ and the refinement level $L$
reads $h = 2^{-L}$. We denote the mesh approximating $\DO_\Upsilon$, with a mesh size $h$,
by $\DO_{h,\Upsilon}$.

The numerical implementation is realized in the software library Gascoigne 3D~\cite{Gascoigne3d},
using iso-parametric finite elements of degree $1$ and $2$. A detailed description of the underlying numerical 
methods is given in~\cite{Richter2017}.

\begin{figure}[t]
  \begin{center}
    \includegraphics[width=.65\textwidth]{./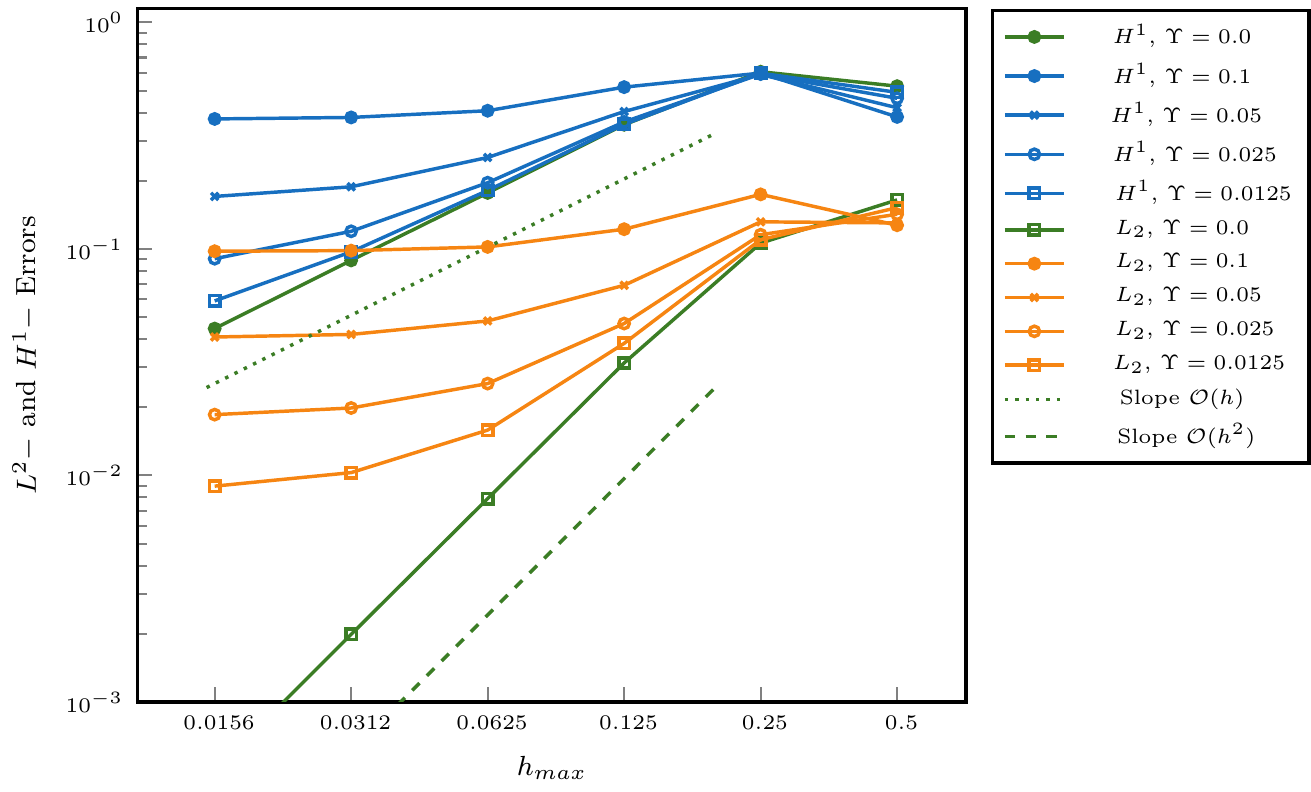}
  \end{center}	
  \caption{$L^2$- and $H^1$-errors w.r.t. mesh-size $h_{max}$ for varying parameter $\Upsilon$ computed for the Laplace problem in three-dimensions with linear finite elements.}
  \label{fig:3dq1}
\end{figure}

\subsection{Laplace equation in two and three dimensions}

We consider the following problem
\begin{equation}\label{stronglaplace}
  -\Delta u = 	f   \text{ in }\DO,\quad 
  u = 0 \text{ on }\partial\DO,
\end{equation}
where $\DO$ is the unit ball in two dimensions and the unit sphere in three
dimensions. 

To compute errors we choose a rotationally symmetric analytical
solution to \eqref{stronglaplace} as
\[
u(r)= - \cos\left(\frac{\pi}{2} r\right)
\]
with $r=\sqrt{x^2+y^2}$ in two and $r=\sqrt{x^2+y^2+z^2}$ in three
dimensions, respectively, 
which results in the right hand sides
\begin{align*}
f_{2d}(r) = \frac{\pi}{2r} \sin\left(\frac{\pi}{2} r\right) +
\frac{\pi^2}{4} \cos\left(\frac{\pi}{2} r\right) ,\quad f_{3d}(r) =
\frac{\pi}{r} \sin\left(\frac{\pi}{2} r\right) + \frac{\pi^2}{4}
\cos\left(\frac{\pi}{2} r\right). 
\end{align*}

\begin{figure}[t]
  \begin{center}
    \includegraphics[width=\textwidth]{./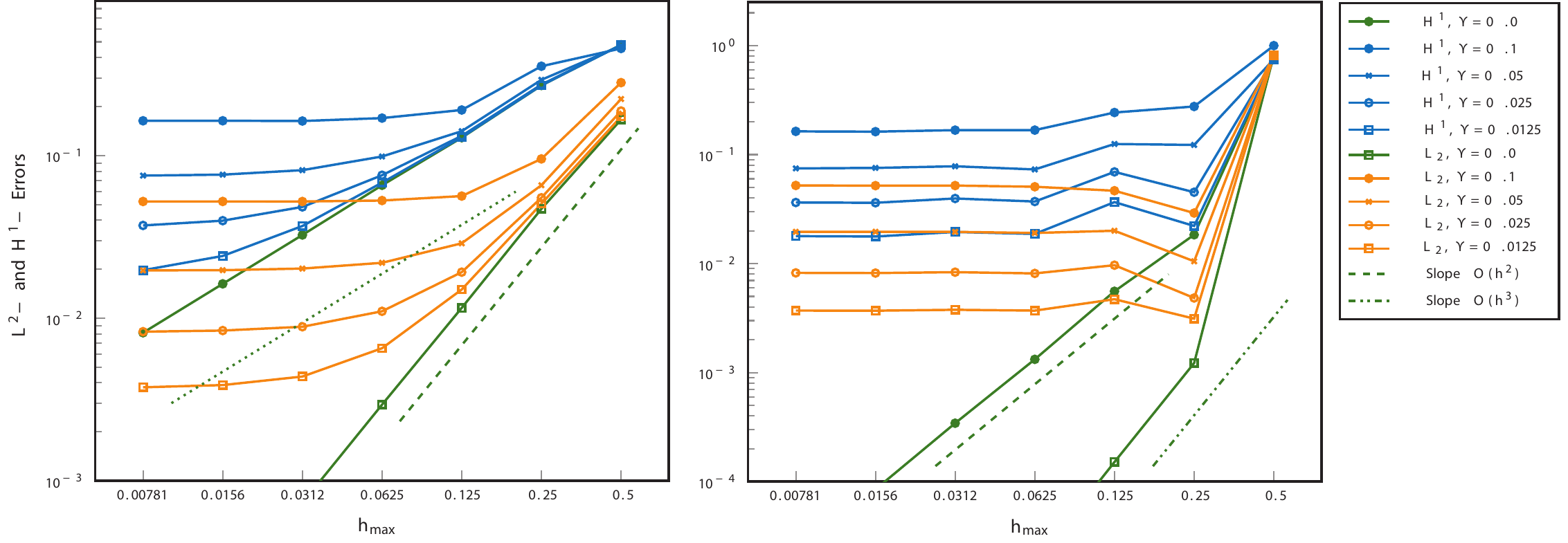}
  \end{center}	
  \caption{$L^2$- and $H^1$-errors w.r.t. mesh-size $h_{max}$ for
    varying parameter $\Upsilon$ computed for the Laplace problem in
    two-dimensions with FE. Left: linear finite elements. Right:
    quadratic finite elements.}
  \label{fig:2dq1q2}
\end{figure}

For the ease of evaluations the errors, the $H^1$- and $L^2$-norms will be computed on the truncated domains  
\[
\begin{aligned}
\DO'_{2d} &=\{(\varphi,\rho) \text{ for } \varphi \in [0,2\pi)  \text{ and } \rho \in (0,0.88) \},\\
\DO'_{3d} &= \{(\varphi,\theta, \rho) \text{ for }\theta \in [0,\pi),
  \varphi \in [0,2\pi) \text{ and } \rho \in (0,0.88) \},
\end{aligned}
\]
see also Remark~\ref{remark:opt}.
We hence do not compute the errors $\|\nabla (u-u_h)\|$ and
$\|u-u_h\|$ on the remainders $\DO\setminus\DO_r$. Therefore we expect
optimal order convergence in the spirit of Corollary~\ref{cor}. The
restriction of the domain to an area within $\DO_h$ is also by
technical reasons, as the evaluation of integrals outside of the meshed area is not easily possible.

\begin{figure}[t]
  \begin{center}
    \includegraphics[width=.65\textwidth]{./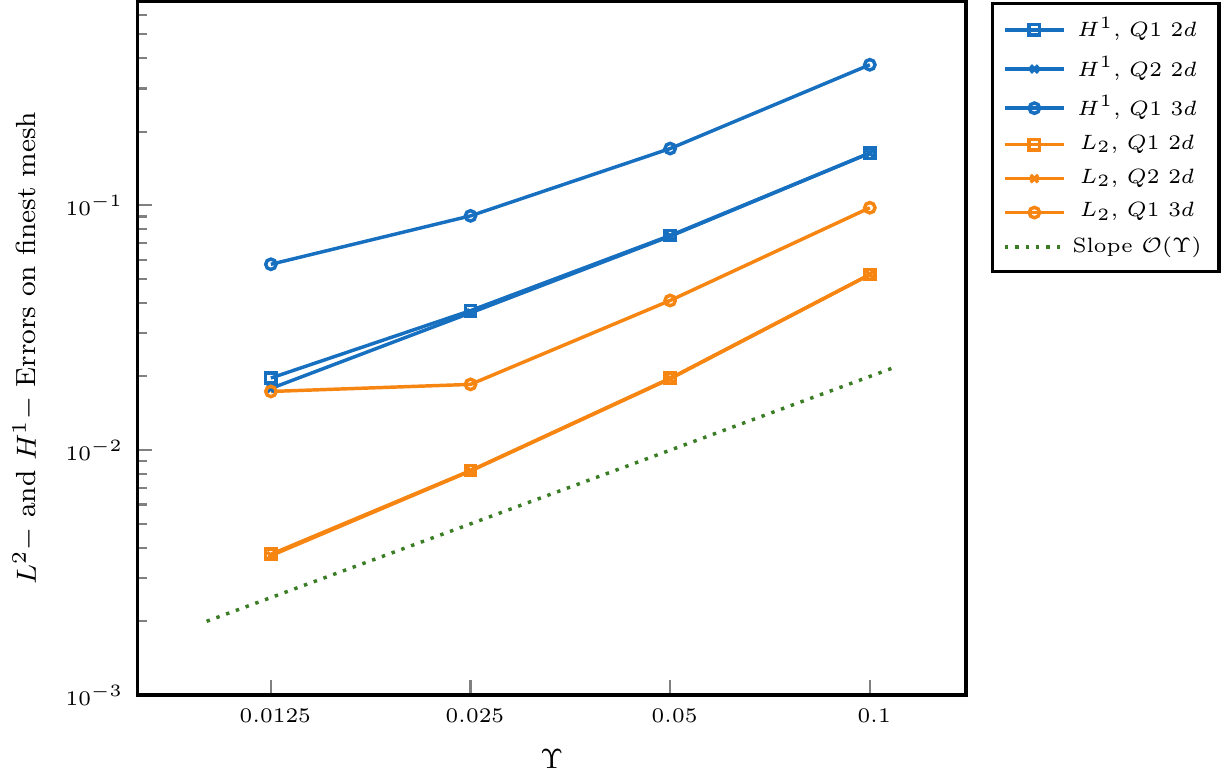}
  \end{center}	
  \caption{$L^2$- and $H^1$-errors w.r.t. parameter $\Upsilon$ computed for the Laplace problem in two and three-dimensions with linear and quadratic finite elements.}
  \label{fig:upsilon}
\end{figure}

In Figures \ref{fig:3dq1} and \ref{fig:2dq1q2} we see the resulting $L^2$- and
$H^1$-errors. We observe that for finer meshes, $\Upsilon$ becomes the
dominating factor of the error. In particular the use of quadratic
finite elements shows a strong imbalance between FE error and geometric
error, which quickly dominates as seen in the left part of
Fig.~\ref{fig:2dq1q2}. The 
result is consistent with 
Corollary~\ref{cor}.  As soon as the FE error is smaller than the
geometry perturbation $\Upsilon$, we do not observe any further
improvement of the error. In Fig.~\ref{fig:upsilon} we show the
convergence in both norms in terms of the geometry parameter
$\Upsilon$. Linear convergence is clearly observed. The apparent decay of
convergence rate in case of the $L^2$-error in three dimensions is
due to the still dominating FE error in this case.

\subsection{Stokes system in two dimensions}

To go beyond the Laplace problem, we investigate the behavior of the solution to the Stokes system with respect to 
the domain variation in two spatial dimensions. The problem is to find the velocity $\textbf{u}$ and the pressure 
$p$ such that
\begin{equation}\label{stokes}
  \div\textbf{u} = 0,\quad 
  -\Delta \textbf{u} +\nabla p = \textbf{f}\text{ in }\DO,
\end{equation}
with homogeneous Dirichlet condition $\textbf{u}=0$ on the boundary
$\partial\DO$ and a right hand side vector $\textbf{f}$. 
System \eqref{stokes} is solved with equal-order iso-parametric finite
elements using pressure stabilization by local projections,
see~\cite{BeckerBraack2001}. 

We prescribe an analytical solution for comparison with the finite
element approximation
\[
\textbf{u}(x,y) =
\cos\left(\frac{\pi}{2}(x^2+y^2)\right)\begin{pmatrix}y \\ -x
\end{pmatrix},
\]
where the corresponding forcing term reads
\[
\textbf{f}(x,y) =\pi \cos\left(\frac{\pi}{2}(x^2+y^2)\right)
\begin{pmatrix} yr^2 \pi 
+ 
4(y-x)\tan\left(\frac{\pi}{2}(x^2+y^2)\right)\\
-xr^2 \pi
-  4(x+y)\tan\left(\frac{\pi}{2}(x^2+y^2)\right) 
\end{pmatrix}.
\]

In Figure \ref{fig:stokes} we see the resulting $L^2$- and
$H^1$-errors. Again we observe that $\Upsilon$ becomes the dominant
factor for finer meshes. This result is not covered by the theoretical
findings, however it shows that geometric uncertainty should be taken
into account for the simulations of flow models. 

\begin{figure}[t]
  \begin{center}
    \includegraphics[width=.65\textwidth]{./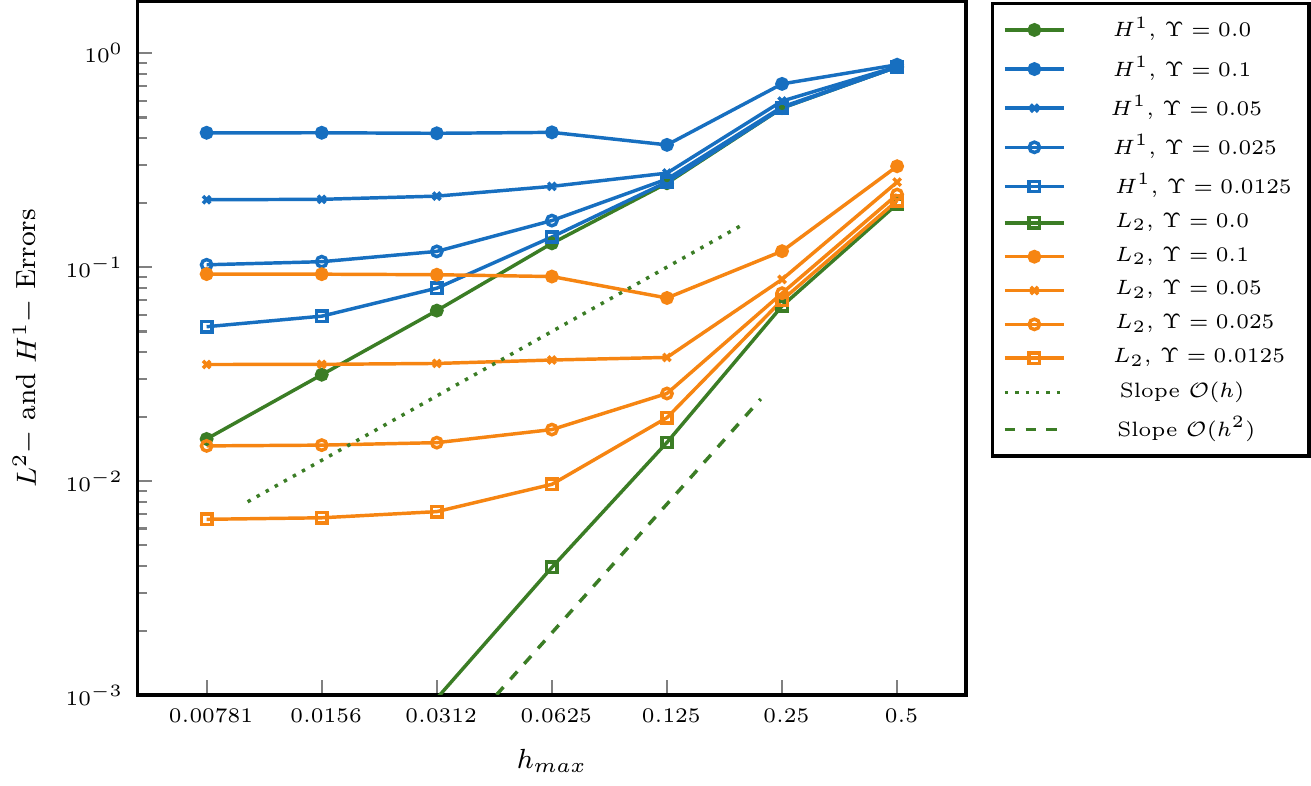}
  \end{center}	
  \caption{$L^2$- and $H^1$-errors w.r.t. mesh-size $h_{max}$ for varying 
  parameter $\Upsilon$ computed for the Stokes problem in two dimensions with 
  linear finite elements.}
  \label{fig:stokes}
\end{figure}

\section{Conclusions}

We have demonstrated that small boundary variations have crucial
impact on the result of the finite element simulations. The
developed error estimates are linear with respect to the maximal
distance $\Upsilon$ between the real and the approximated domains,
cf. Theorem \ref{theorem:main}. We have illustrated the sharp nature of
this bound in the computations performed in
Section~\ref{sec:comp}. 

Particularly, in the case of first and second order approximation we 
observe how the relation between the mesh size $h$ and aforementioned 
$\Upsilon$ impact the resulting $L^2$- and $H^1$-errors. 
The same behavior has been demonstrated numerically for the Stokes system.

In practice we do not have control on the accuracy of the domain
reconstruction. This has shown that it is worth to take into account
the geometric uncertainty when deciding on the mesh-size in order to
avoid unnecessary computational effort.

In this work we have focused on the Laplace problem
\eqref{vessel:laplaceweak}. Additionally, the Stokes system has been
treated numerically and it exhibits similar features. In future work we
will extend this consideration to flow models, in particular the Navier-Stokes equations \cite{MiMiRi2019}. Among the
additional challenges in extending the present work to the
Navier-Stokes system are the consideration of the typical saddle-point
structure of incompressible flow models introducing a pressure
variable~\cite{RannacherRichter2018} and the difficulty of
nonlinearities introduced by the convective term, and thus the
non-uniqueness of solutions~\cite{HeywoodRannacher1990}.  

\begin{acknowledgement}
  Both authors acknowledge the financial support by the Federal Ministry of
  Education and Research of Germany, grant number 05M16NMA as well as
  the GRK 2297 MathCoRe, funded by the Deutsche
  Forschungsgemeinschaft, grant number 314838170. Finally we thank the 
  anonymous reviewers. Their time and effort helped us to significantly improve 
  the manuscript. 
\end{acknowledgement}


\end{document}